\documentclass[11pt]{article}
\usepackage{booktabs} 
\usepackage{fullpage, times}

\usepackage{times}
\usepackage{graphicx} 
\usepackage{subfigure} 


\usepackage{algorithm}
\usepackage{algorithmic}

\usepackage{hyperref}



%
\usepackage{amsmath}
\usepackage{color}
\usepackage{amsfonts}
\usepackage{amsthm}
\usepackage{hyperref}
\usepackage{amssymb}
\usepackage{bbm}
\usepackage{graphicx}

\usepackage{algorithm}
\usepackage{algorithmic}

\usepackage{mathtools}
\usepackage{hyperref}

\usepackage{enumitem}

\usepackage{verbatim}


%

%



\newtheorem{definition}{Definition}
\newtheorem{lemma}{Lemma}
\newtheorem{corollary}{Corollary}
\newtheorem{theorem}{Theorem}

\newtheorem*{conjecture*}{Conjecture}

\hypersetup{linktocpage}



%
{
\vspace{0.01cm}
\begin{center}
	\begin{boxedminipage}{0.8\linewidth}
		\begin{center}
			\textbf{\texttt{#1}}
		\end{center}
	} %
	{
	\end{boxedminipage}
\end{center}
 \vspace{0.3cm}
}

\newcommand{\bx}{{\mathbf x}}
\newcommand{\bc}{{\mathbf c}}
\newcommand{\bq}{{\mathbf q}}

\newcommand{\by}{{\mathbf y}}

\newcommand{\bv}{{\mathbf v}}

\newcommand{\be}{{\mathbf e}}




\newcommand{\bE}{\mathbb{E}}

\newcommand{\reals}{\mathbb{R}}





\newcommand{\cC}{\mathcal{C}}

\newcommand{\cF}{\mathcal{F}}

\newcommand{\cO}{\mathcal{O}}
\newcommand{\cP}{\mathcal{P}}




\newcommand{\bones}{\mathbbm{1}}
\renewenvironment{proof}{\par\noindent{\bf Proof\ }}{\hfill\BlackBox\\[2mm]}
\newcommand{\BlackBox}{\rule{1.5ex}{1.5ex}}

\newcommand{\RNum}[1]{\uppercase\expandafter{\romannumeral #1\relax}}
\makeatletter
\def\moverlay{\mathpalette\mov@rlay}
\def\mov@rlay#1#2{\leavevmode\vtop{%
   \baselineskip\z@skip \lineskiplimit-\maxdimen
   \ialign{\hfil$\m@th#1##$\hfil\cr#2\crcr}}}
\newcommand{\charfusion}[3][\mathord]{
    #1{\ifx#1\mathop\vphantom{#2}\fi
        \mathpalette\mov@rlay{#2\cr#3}
      }
    \ifx#1\mathop\expandafter\displaylimits\fi}
\makeatother



\DeclareMathOperator*{\argmin}{argmin} 
\DeclareMathOperator*{\argmax}{argmax} 

\renewcommand{\eqref}[1]{Equation~(\ref{#1})}
\newcommand{\ineqref}[1]{Inequality~(\ref{#1})}

\newcommand{\thmref}[1]{Theorem~\ref{#1}}
\newcommand{\lemref}[1]{Lemma~\ref{#1}}
\newcommand{\defref}[1]{Definition~\ref{#1}}


\newcommand{\norm}[1]{\left\Vert#1\right\Vert}
\newcommand{\normsq}[1]{\left\Vert#1\right\Vert^2}
\newcommand{\inprod}[2]{ \left< #1 , #2 \right>}
\newcommand{\circpar}[1]{\left( #1 \right)}


\newcommand{\dom}{\text{dom}}


\newcommand{\diag}[1]{\text{diag}\circpar{#1}}

\newcommand{\bigO}[1]{\mathcal{O}{\left(#1\right)}}
\newcommand{\bigtO}[1]{\tilde{\mathcal{O}}{\left(#1\right)}}



\begin{document} 

%
%
%

\title{On the Iteration Complexity of Oblivious First-Order \\Optimization Algorithms}
\author{Yossi Arjevani\\Weizmann Institute of Science\\\texttt{yossi.arjevani@weizmann.ac.il}
\and
 Ohad Shamir\\Weizmann Institute of Science\\{\texttt{ohad.shamir@weizmann.ac.il}}}
\date{}
\maketitle
																				\begin{abstract}

We consider a broad class of first-order optimization algorithms which are \emph{oblivious}, in the sense that their step sizes are scheduled regardless of the function under consideration, except for limited side-information such as smoothness or strong convexity parameters. 
With the knowledge of these two parameters, we show that any such algorithm attains an iteration complexity lower bound of $\Omega(\sqrt{L/\epsilon})$ for $L$-smooth convex functions, and  $\tilde{\Omega}(\sqrt{L/\mu}\ln(1/\epsilon))$ for $L$-smooth $\mu$-strongly convex functions. These lower bounds are stronger than those in the traditional oracle model, as they hold independently of the dimension. To attain these, we abandon the oracle model in favor of a structure-based approach which builds upon a framework recently proposed in \cite{arjevani2015lower}. We further show that without knowing the strong convexity parameter, it is impossible to attain an iteration complexity better than  $\tilde{\Omega}\circpar{(L/\mu)\ln(1/\epsilon)}$. This result is then used to formalize an observation regarding $L$-smooth convex functions, namely, that the iteration complexity of algorithms employing time-invariant step sizes must be at least $\Omega(L/\epsilon)$.

																					\end{abstract}

																				\section{Introduction}
The ever-increasing utility of mathematical optimization in machine learning and other fields has led to a great interest in understanding the computational boundaries of solving optimization problems. Of a particular interest is the class of unconstrained smooth, and possibly strongly convex, optimization problems. Formally, we consider the following problem,
\begin{align*} 
	\min_{\bx\in\reals^d} f(\bx) 
\end{align*}
where $f:\reals^d\to\reals$ is convex and $L$-smooth, i.e.,
\begin{align*}
	\norm{\nabla f(\bx)- \nabla f(\by)} &\le L\norm{\bx-\by}
\end{align*}
for some $L>0$, and possibly \emph{$\mu$-strongly convex}, that is,
\begin{align*}
		f(\by) \ge f(\bx) + \inprod{\by-\bx}{\nabla f(\bx)} + \frac{\mu}{2}\normsq{\by-\bx}
\end{align*}
for some $\mu>0$. In this work, we address the question as to how fast can one expect to solve this sort of problems to a prescribed level of accuracy, using methods which are based on first-order information (gradients, or more generally sub-gradients) alone.

The standard approach to quantify the computational hardness of optimization problems is through the \emph{Oracle Model}. In this approach, one models the interaction of a given optimization algorithm with some instance from a class of functions as a sequence of queries, issued by the algorithm, to an external first-order oracle procedure. Upon receiving a query point $\bx\in\reals^d$, the oracle reports the corresponding value $f(\bx)$ and gradient $\nabla f(\bx)$.  In their seminal work, Nemirovsky and Yudin \cite{nemirovskyproblem} showed that for any first-order optimization algorithm, there exists an $L$-smooth and $\mu$-strongly convex function $f:\reals^d\to\reals$ such that the number of queries required to obtain an \emph{$\epsilon$-optimal} solution $\tilde{\bx}$ which satisfies
$$f(\tilde{\bx}) < \min_{\bx\in\reals^d} f(\bx)+ \epsilon,$$
is at least\footnote{Following standard conventions, here, tilde notation hides logarithmic factors in the smoothness parameter, the strong convexity parameter and the distance of the initialization point from the minimizer.}
\begin{align} \label{ineq:sqrtlb_lb}
	&\tilde{\Omega}\circpar{\min\left\{d, \sqrt{\kappa}\right\} \ln(1/\epsilon)},&\mu>0\\
	&\tilde{\Omega}{(\min\{d\ln(1/\epsilon),\sqrt{L/\epsilon}\})},&\mu=0\nonumber
\end{align}
where $\kappa\coloneqq L/\mu$ is the so-called \emph{condition number}. This lower bound, although based on information considerations alone, is tight. Concretely, it is achieved by a combination of Nesterov's well-known accelerated gradient descent (AGD, \cite{nesterov1983method}) with an iteration complexity of 
\begin{align} \label{ineq:agd_ub}
	&\bigtO{\sqrt{\kappa} \ln(1/\epsilon)},&\mu>0\\
	&\bigO{\sqrt{L/\epsilon}},&\mu=0,\nonumber
\end{align}
and the center of gravity method (MCG, \cite{levin1965algorithm,newman1965location}) whose iteration complexity is 
\begin{align*}
	\bigO{d\ln(1/\epsilon)}. 
\end{align*}

Although the combination of MCG and AGD appear to achieve optimal iteration complexity, this is not the case when focusing on \emph{computationally efficient} algorithms. In particular, the per-iteration cost of MCG scales poorly with the problem dimension, rendering it impractical for high-dimensional problems. In other words, not taking into account the computational resources needed for processing first-order information limits the ability of the oracle model to give a faithful picture of the complexity of optimization.\\

To overcome this issue \cite{arjevani2015lower} recently proposed the framework of $p$-Stationary Canonical Linear Iterative ($p$-SCLI) in which, instead of modeling the way algorithms acquire information on the function at hand, one assumes certain dynamics which restricts the way new iterates are being generated. This framework includes a large family of computationally efficient first-order algorithms, whose update rule, when applied on quadratic functions, reduce to a recursive application of some fixed linear transformation on the most recent $p$ points (in other words, $p$ indicates the number of previous iterates stored by the algorithm in order to compute a new iterate). The paper showed that the iteration complexity of $p$-SCLIs over smooth and strongly convex functions is bounded from below by 
\begin{align} \label{ineq:pscli_lb}
	\tilde{\Omega}\circpar{ \sqrt[p]{\kappa} \ln(1/\epsilon)}.
\end{align}

Crucially, as opposed to the classical lower bounds in (\ref{ineq:sqrtlb_lb}), the lower bound in (\ref{ineq:pscli_lb}) holds for any dimension $d>1$. This implies that even for fixed $d$, the iteration complexity of $p$-SCLI algorithms must scale with the condition number. That being said, the lower bound in (\ref{ineq:pscli_lb}) raises a few major issues which we wish to address in this work:
\begin{itemize}[leftmargin=*]
\item  Practical first-order algorithms in the literature only attain this bound for $p=1,2$ (by standard gradient descent and AGD, respectively), so the lower bound appears intuitively loose. Nevertheless, \cite{arjevani2015lower} showed that this bound is actually tight for all $p$. The reason for this discrepancy is that the bound for $p>2$ was shown to be attained by $p$-SCLI algorithms whose updates require exact knowledge of spectral properties of the Hessian, which is computationally prohibitive to obtain in large-scale problems. In this work, we circumvent this issue by systematically considering the \emph{side-information} available to the algorithm. In particular, we show that under the realistic assumption, that the algorithm may only utilize the strong convexity and smoothness of the objective function, the lower bound in (\ref{ineq:pscli_lb}) can be substantially improved. 

	\item The lower bound stated above is limited to \emph{stationary} optimization algorithms whose coefficients $\alpha_j,\beta_j$ are not allowed to change in time (see Section \ref{subsection:pcli_classification}). 

\item The formulation suggested in \cite{arjevani2015lower} does not allow generating more than one iterate at a time. This requirement is not met by many popular optimization problems for finite sums minimization.

\item Lastly, whereas the proofs in \cite{arjevani2015lower} are elaborate and technically complex, the proofs we provide here are relatively short and simple.
\end{itemize}

In its simplest form, the framework we consider is concerned with algorithms which generate iterates by applying the following simple update rule repeatedly: 
\begin{align} \label{def:basic_form}
	\bx^{(k+1)} &=\sum_{j=1}^{p} \alpha_j \nabla f(\bx^{(k+1-j)}) + \beta_j \bx^{(k+1-j)},
\end{align} 
where $\alpha_j,\beta_j\in\reals$ denote the corresponding coefficients. 
A clear advantage of this class of algorithms is that, given the corresponding gradients, the computational cost of executing each update rule scales linearly with the dimension of the problem and $p$.

This basic formulation already subsumes popular first-order optimization algorithms. For example, at each iteration the Gradient Descent (GD) method generates a new iterate by computing a linear combination of the current iterate and the gradient of the current iterate, i.e., 
\begin{align}
	 \bx^{(k+1)}&= \bx^{(k)} + \alpha\nabla f(\bx^{(k)})
\end{align}
for some real scalar $\alpha$. Another important example is a stationary variant of AGD \cite{nesterov2004introductory} and the heavy-ball method (e.g., \cite{polyak1987introduction}) which generates iterates according to
\begin{align}\label{update:AGD}
	\bx^{(k+1)}&= \beta_1 \bx^{(k)} + \alpha_1 \nabla f(\bx^{(k)})\nonumber\\&~~+ \beta_2 \bx^{(k-1)} + \alpha_2 \nabla f(\bx^{(k-1)}).
\end{align}

In this paper, we follow a generalized form of (\ref{def:basic_form}) which is exhibited by standard optimization algorithms: GD, conjugate gradient descent, sub-gradient descent, AGD, the heavy-ball method, coordinate descent, quasi-Newton methods, ellipsoid method, etc. The main difference being how much effort one is willing to put in computing the coefficients of the optimization process. We call these methods first-order $p$-Canonical Linear Iterative optimization algorithms (in this paper, abbr. $p$-CLI). We note that our framework (as a method to prove lower bounds) also applies to stochastic algorithms, as long as the expected update rule (conditioned on the history) follows a generalized form similar to (\ref{def:basic_form}).

In the context of machine learning, many algorithms for minimizing finite sums of functions with, possibly, a regularization term (also known as, Regularized Empirical Risk Minimization) also fall into our framework, e.g., Stochastic Average Gradient (SAG, \cite{schmidt2013minimizing}), Stochastic Variance Reduction Gradient (SVRG, \cite{johnson2013accelerating}), Stochastic Dual Coordinate Ascent (SDCA, \cite{shalev2013stochastic}), Stochastic Dual Coordinate Ascent without Duality (SDCA without duality, \cite{shalev2015sdca}) and SAGA \cite{defazio2014saga}, to name a few, and as such,  are subject to the same lower bounds established through this framework.

In its full generality, the formulation of this framework is too rich to say much. In what follows, we shall focus on \emph{oblivious} p-CLIs, which satisfy the realistic assumption that the coefficients $\alpha_j,\beta_j$ do not depend on the specific function under consideration. Instead, they can only depend on time and some limited side-information on the function (this term will be made more precise in \defref{definition:side_information}). In particular, we show that the iteration complexity of oblivious $p$-CLIs over $L$-smooth and $\mu$-strongly convex functions whose coefficients are allowed to depend on $\mu$ and $L$ is 
\begin{align}\label{ineq:contribution_lower_bounds}
	&\tilde{\Omega}\circpar{\sqrt{\kappa} \ln(1/\epsilon)},&\mu>0\\
	&\tilde{\Omega}{(\sqrt{L/\epsilon})},&\mu=0\nonumber.
\end{align}
Note that, in addition to being \emph{dimension-independent} (similarly to (\ref{ineq:pscli_lb})), this lower bound holds regardless of $p$. We further stress that the algorithms discussed earlier which attain the lower bound stated in (\ref{ineq:pscli_lb}) are not oblivious and require more knowledge of the objective function. 

In the paper, we also demonstrate other cases where the side-information available to the algorithm crucially affects its performance, such as knowing vs. not knowing the strong convexity parameter.


Finally, we remark that this approach of modeling the structure of optimization algorithms, as opposed to the more traditional oracle model, can be also found in \cite{polyak1987introduction,lessard2014analysis,flammarion2015averaging,drori2014contributions}. However, whereas these works are concerned with upper bounds on the iteration complexity, in this paper we primarily focus on lower bounds.

To summarize, our main contributions are the following:
\begin{itemize}[leftmargin=*]
	\item In Section \ref{subsection:definitions}, we propose a novel framework which substantially generalizes the framework introduced in \cite{arjevani2015lower}, and includes a large part of modern first-order optimization algorithms. 
	
	\item In Section \ref{subsection:pcli_classification}, we identify within this framework the class of oblivious optimization algorithms, whose step sizes are scheduled regardless of the function at hand, and provide an iteration complexity lower bound as given in (\ref{ineq:contribution_lower_bounds}). We improve upon \cite{arjevani2015lower} by establishing lower bounds which hold both for smooth functions and smooth and strongly convex functions, using simpler and shorter proofs. Moreover, in addition to being \emph{dimension-independent}, the lower bounds we derive here are tight. In the context of machine learning optimization problems, the same lower bound is shown to hold on the bias of methods for finite sums with a regularization term, such as: SAG, SAGA, SDCA without duality and SVRG.
	
	\item Some oblivious algorithms for $L$-smooth and $\mu$-strongly convex functions admit a linear convergence rate using step sizes which are scheduled regardless of the strong convexity parameter (e.g., standard GD with a step size of $1/L$. See Section 3 in \cite{schmidt2013minimizing} and Section 5 in \cite{defazio2014saga}). In Section \ref{subsection:no_strong}, we show that adapting to 'hidden' strong convexity, without explicitly incorporating the strong convexity parameter, results in an inferior iteration complexity of
	\begin{align} \label{contribution:sc_lower_bound}
		\tilde{\Omega}\circpar{\kappa\ln(1/\epsilon)}.
	\end{align} 
	This result sheds some light on a major issue regarding scheduling step sizes of optimization algorithms .
	
	\item In Section \ref{subsection:acceleration_and_stationarity}, we discuss the class of stationary optimization algorithms, which use time-invariant step sizes, over $L$-smooth functions and show that they admit a tight iteration complexity of 
	\begin{align} \label{contribution:smooth_lower_bound}
		\Omega(L/\epsilon).
	\end{align}
	In particular, this bound implies that in terms of dependency on the accuracy parameter $\epsilon$, SAG and SAGA admit an optimal iteration complexity w.r.t. the class of stochastic stationary $p$-CLIs. 
	Acceleration schemes, such as \cite{frostig2015regularizing,lin2015universal}, are able to break this bound by re-scheduling these algorithms in a non-stationary (though oblivious) way.
\end{itemize}

																							\section{Framework} \label{section:framework}

\subsection{Definitions} \label{subsection:definitions}
In the sequel we present our framework for analyzing first-order optimization algorithms. We begin by providing a precise definition of a class of optimization problems, accompanied by some side-information. We then formally define the framework of $p$-CLI algorithms and the corresponding iteration complexity.

\begin{definition}[Class of Optimization Problems] \label{definition:side_information}
A class of optimization problems $\cC$ is an ordered pair of $(\cF,I)$, where $\cF$ is a family of functions which defined over the same domain, and $I:\cF\to\mathfrak{I}$ is a mapping which provides for each $f\in\cF$ the corresponding side-information element in some set $\mathfrak{I}$. The domain of the functions in $\cF$ is denoted by $\dom(\cC)$.
\end{definition}

For example, let us consider quadratic functions of the form $$\bx\mapsto\frac{1}{2}\bx^\top Q \bx + \bq^\top \bx,$$ where $Q\in\reals^{d\times d}$ is a positive semidefinite matrix whose spectrum lies in $\Sigma\subseteq\reals^{+}$, and $\bq\in\reals^d$. Here, each instance may be accompanied with either a complete specification of $\Sigma$; lower and upper bounds for $\Sigma$; just an upper bound for $\Sigma$; a rough approximation of $Q^{-1}$ (e.g., sketching techniques), etc. We will see that the exact nature of side-information strongly affects the iteration complexity, and that this differentiation between the family of functions under consideration and the type of side-information is not mere pedantry, but a crucial necessity. 

We now turn to rigorously define first-order $p$-CLI optimization algorithms. The basic formulation shown in (\ref{def:basic_form}) does not allow generating more than one iterate at a time. The framework which we present below relaxes this restriction to allow a greater generality which is crucial for incorporating optimization algorithms for finite sums (see Stochastic $p$-CLIs in Section \ref{subsection:pcli_classification}). We further extend (\ref{def:basic_form}) to allow non-differentiable functions and constraints into this framework, by generalizing gradients to sub-gradients. 

\theoremstyle{plain}
\begin{definition}\label{definition:pcli} [First-order $p$-CLI]
An optimization algorithm is called a first-order $p$-Canonical Linear Iterative ($p$-CLI) optimization algorithm over a class of optimization problems $\cC=(\cF,I(\cdot))$, if given an instance $f\in\cF$ and an arbitrary set of $p$ initialization points $\bx^0_{1},\dots,\bx^0_{p}\in\dom(\cC)$, it operates by iteratively generating points for which
\begin{align} \label{assumption:dynamics}
	\bx^{(k+1)}_i \in\sum_{j=1}^{p} \circpar{A^{(k)}_{ij} \partial f  + B^{(k)}_{ij}  }(\bx^{(k)}_{j}), \quad k=0,1,\dots 
\end{align} 
holds, where the coefficients $A^k_{ij},B^k_{ij}$ are some linear operators which may depend on $I(f)$. 
\end{definition}	
Formally, the expression $A^{(k)}_{ij} \partial f$ in (\ref{assumption:dynamics})  denotes the composition of  $A^{(k)}_{ij}$ and the sub-gradient operator. Likewise, the r.h.s. of (\ref{assumption:dynamics}) is to be understand as an evaluation of sum of two operators $A^{(k)}_{ij} \partial f$ and $B^{(k)}_{ij}$ at $\bx^{(k)}_{j}$. 

In this level of generality, this framework encompasses very different kinds of optimization algorithms. We shall see that various assumptions regarding the coefficients complexity and side-information yield different lower bound on the iteration complexity.

We note that although this framework concerns algorithms whose update rules are based on a fixed number of points, a large part of the results shown in this paper holds in the case where $p$ grows indefinitely in accordance with the number of iterations.

We now turn to provide a formal definition of \emph{Iteration Complexity}. We assume that the point returned after $k$ iterations is $\bx_p^{(k)}$. This assumption merely serves as a convention and is not necessary for our bounds to hold. 
\begin{definition}[Iteration Complexity]\label{definition:iteration_complexity}
The iteration complexity $\mathcal{IC}(\epsilon)$ of a given $p$-CLI w.r.t. a given problem class $\cC=(\cF,I)$ is defined to be the minimal number of iterations $K$ such that 
\begin{align*}
	  f(\bE\bx^{(k)}_p) - \min_{\bx\in\dom{\cC}} f(\bx) <\epsilon,\quad \forall k\ge K
\end{align*}
uniformly over $\cF$, where the expectation is taken over all the randomness introduced into the optimization process (see Stochastic $p$-CLIs below). 
\end{definition}
For simplicity, when stating bounds in this paper, we shall omit the dependency of the iteration complexity on the initialization points. The precise dependency can be found in the corresponding proofs.

													\subsection{\texorpdfstring{Classification of First-order $p$-CLIs and Scope of Work}{Classification of p-CLIs and Scope of Work}}

														\label{subsection:pcli_classification}
As mentioned before, we cannot say much about the framework in its full generality. In this paper, we restrict our attention to the following three (partially overlapping) classes of $p$-CLIs:
\begin{description}
\item[Stationary $p$-CLI] where the coefficients are allowed to depend exclusively on side-information (see \defref{definition:iteration_complexity}). In particular, the coefficients are not allowed to change with time. Seemingly restrictive, this class of $p$-CLIs subsumes many efficient optimization methods, especially when coupled with stochasticity (see below). Notable stationary $p$-CLIs are: GD with fixed step size \cite{nesterov2004introductory}, stationary AGD \cite{nesterov2004introductory} and the Heavy-Ball method \cite{polyak1987introduction}.

\item[Oblivious $p$-CLI] where the coefficients are allowed to depend on side-information, as well as to change in time. Notable algorithms here are GD and AGD with step sizes which are scheduled irrespectively of the function under consideration \cite{nesterov2004introductory} and the Sub-Gradient Descent method (e.g., \cite{shor2012minimization}). 


\item[Stochastic $p$-CLI] where (\ref{assumption:dynamics}) holds with respect to $\bE \bx^{(k)}_j$, that is, 
\begin{align} \label{assumption:stochastic_dynamics}
		\bE\bx^{(k+1)}_i \in\sum_{j=1}^{p} \circpar{A^{(k)}_{ij} \partial f  + B^{(k)}_{ij}  }(\bE\bx^{(k)}_{j}).
\end{align}
Stochasticity is an efficient machinery of tackling optimization problems where forming the gradient is prohibitive, but engineering an efficient unbiased estimator is possible. Such situations occur frequently in the context of machine learning, where one is interested in minimizing finite sums of large number of convex functions,
\begin{align*}
	\min_{\bx\in\reals^d} F(\bx)\coloneqq \sum_{i=1}^m f_i(\bx),
\end{align*}
in which case, forming a sub-gradient of $F$ at a given point may be too expensive. Notable optimization algorithms for variants of this problem are: SAG, SDCA without duality, SVRG and SAGA, all of which are stationary stochastic $p$-CLIs. Moreover, as opposed to algorithms which produce only one new point at each iteration (e.g., (\ref{def:basic_form})), these algorithms sometimes update a few points at the same time. To illustrate this, let us express SAG as a stochastic stationary $(m+1)$-CLI. In order to avoid the computationally demanding task of forming the exact gradient of $F$ at each iteration, SAG uses the first $m$ points to store estimates for the gradients of the individual functions $$\by_i\approx \nabla f_i(\bx^{(k)}_{m+1}),~i=1\dots m.$$ At each iteration, SAG sets $\by_i=\nabla f_i(\bx^{(k)}_{m+1})$ for some randomly chosen $i\in[m]$, and then updates $\bx^{(k)}_{m+1}$ accordingly, by making a gradient step with a fixed step size using the new estimate for $\nabla F(\bx^{(k)}_{m+1})$. This implies that the expected update rule of SAG is stationary and satisfies (\ref{assumption:stochastic_dynamics}).
\end{description}

As opposed to an oblivious schedule of step sizes, many optimization algorithms set the step sizes according to the  first-order information which is accumulated during the optimization process. A well-known example for such a non-oblivious schedule is conjugate gradient descent, whose update rule can be expressed as follows:
\begin{align}
\bx^{(k+1)}_1&=  \bx^{(k)}_2\nonumber\\
	\bx^{(k+1)}_2&=  (\alpha \partial f + (1+\beta)I ) \bx^{(k)}_2- \beta \bx^{(k)}_1,
\end{align}
where the step sizes are chosen so as to minimize $f(\bx^{(k+1)}_1)$ over $\alpha,\beta\in\reals$. Other algorithms employ coefficients whose schedule does not depend directly on first-order information. For example, at each iteration coordinate descent updates one coordinate of the current iterate, by completely minimizing the function at hand along some direction. In our formulation, such update rules are expressed using coefficients which are diagonal matrices. In a sense, the most expensive coefficients used in practice are the one employed by Newton method, which in this framework, may be expressed as follows:
\begin{align}
	\bx^{(k+1)}_1&= (I-\nabla^2(f)^{-1} \nabla f) \bx_1^{(k)} 
\end{align}

The algorithms mentioned above: conjugate gradient descent, coordinate descent and Newton methods; as well as other non-oblivious $p$-CLI optimization algorithms, such as quasi-Newton methods (e.g., \cite{nocedal2006numerical}) and the ellipsoid method (e.g., \cite{atallah1998algorithms}), will not be further considered in this paper.

																				\section{Lower Bounds on the Iteration Complexity of Oblivious \texorpdfstring{$p$}{p}-CLIs}\label{section:main_results}

Having formally defined the framework, we are now in position to state our first main result. Perhaps the most common side-information used by practical algorithms is the strong-convexity and smoothness parameters of the objective function. Oblivious $p$-CLIs with such side-information tend to have low per-iteration cost and a straightforward implementation. However, this lack of adaptivity to the function being optimized results in an inevitable lower bound on the iteration complexity:

\begin{theorem}\label{theorem:main_lower_bound}
Suppose the smoothness parameter $L$ and  the strong convexity parameter $\mu$ are known, i.e., $I(\cdot)=\{L,\mu\}$. Then the iteration complexity of any oblivious, possibly stochastic, $p$-CLI optimization algorithm is bounded from below by
\begin{align} \label{ineq:main_lower_bounds}
	&\tilde{\Omega}\circpar{\sqrt{\kappa}\ln(1/\epsilon)}, &\mu>0\\
	&\Omega{(\sqrt{L/\epsilon})},&\mu=0,\nonumber
\end{align}
where $\kappa\coloneqq L/\mu$ 
\end{theorem}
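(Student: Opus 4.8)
The plan is to reduce the analysis of an arbitrary oblivious $p$-CLI to its behavior on a carefully chosen family of quadratic functions, where the update rule becomes a linear recursion that can be diagonalized. Since the coefficients $\alpha_j,\beta_j$ (equivalently, the operators $A^{(k)}_{ij},B^{(k)}_{ij}$) depend only on $I(f)=\{L,\mu\}$ and on time $k$, the \emph{same} sequence of coefficients must be used for every function in the class $\cF$ of $L$-smooth $\mu$-strongly convex functions. This is the crux of the argument: we will exhibit two (or a one-parameter family of) quadratics sharing the same $L$ and $\mu$ but with different minimizers/curvatures, so that no single oblivious schedule can drive the error to $\epsilon$ on all of them quickly.

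First I would restrict to one-dimensional quadratics $f_\lambda(x)=\tfrac{\lambda}{2}(x-c)^2$ with $\lambda\in[\mu,L]$; on such a function, $\partial f_\lambda(x)=\lambda(x-c)$, so the $p$-CLI dynamics (\ref{assumption:dynamics}) — or, in the stochastic case, the expected dynamics (\ref{assumption:stochastic_dynamics}) — become an \emph{affine} recursion in the stacked iterate vector $(\bE\bx^{(k)}_1,\dots,\bE\bx^{(k)}_p)$, whose homogeneous part is a matrix polynomial in $\lambda$ evaluated with the (time-varying) coefficients. Writing the error after $K$ steps as a product of $K$ transition matrices applied to the initial error, the scalar governing convergence on $f_\lambda$ is a polynomial $q_K(\lambda)$ (of degree at most, say, $K$ or $Kp$) in the single variable $\lambda$, and $\epsilon$-optimality after $K$ iterations forces $|q_K(\lambda)|$ to be small — roughly $q_K(\lambda)^2 \lesssim \epsilon/\lambda \cdot (\text{dist to min})^{-2}$ — \emph{simultaneously for all} $\lambda\in[\mu,L]$. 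Crucially $q_K$ does not depend on $\lambda$ as a parameter: it is a fixed polynomial, because the schedule is oblivious. One also checks a normalization constraint, e.g. $q_K$ must reproduce the correct fixed point, which (as in Chebyshev-type lower bounds) pins down $q_K(0)$ or the leading behavior and prevents the trivial solution $q_K\equiv 0$.

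The heart of the proof is then a Chebyshev/Markov-brothers argument: a real univariate polynomial $q$ of degree $K$ that is uniformly bounded by $\delta$ on the interval $[\mu,L]$ but satisfies a fixed nonzero normalization (coming from consistency of the iteration — the analogue of ``$q(0)=1$'' in the classical argument) must have degree $K = \Omega(\sqrt{L/\mu}\,\ln(1/\delta))$ in the strongly convex case, and $K=\Omega(\sqrt{L/\epsilon})$ in the case $\mu=0$ (where the relevant interval degenerates to $(0,L]$ and one uses that the error on $f_\lambda$ as $\lambda\to 0$ cannot be made $\epsilon$-small too fast). This is exactly the type of estimate that underlies the AGD-optimality lower bound, and it is where the $\sqrt\kappa$ appears. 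Substituting $\delta \asymp \epsilon$ (up to the logarithmic factors hidden in $\tilde\Omega$, which absorb $L$, $\mu$, and the initialization distance) yields (\ref{ineq:main_lower_bounds}). For the stochastic case no extra work is needed, since Definition~\ref{definition:iteration_complexity} measures $f(\bE\bx^{(k)}_p)$ and (\ref{assumption:stochastic_dynamics}) says the expected iterates obey the same linear recursion, so the deterministic analysis applies verbatim to $\bE\bx^{(k)}$.

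The main obstacle I anticipate is handling the full generality of Definition~\ref{definition:pcli}: the coefficients are arbitrary linear operators (matrices), the recursion has order $p$ rather than $1$, and multiple iterates are updated at once. To tame this I would (i) note that it suffices to lower-bound the iteration complexity on the sub-class of one-dimensional quadratics, where each $A^{(k)}_{ij},B^{(k)}_{ij}$ is just a scalar, collapsing the operator issue; (ii) convert the order-$p$ recursion to a first-order recursion on $\reals^p$ via a companion-type construction, so that ``the scalar error polynomial $q_K(\lambda)$'' is really a fixed entry (or a projection $\be_p^\top(\cdots)$) of a product of $p\times p$ matrices whose entries are fixed polynomials in $\lambda$ — still a single univariate polynomial of controlled degree in $\lambda$ after $K$ steps; and (iii) argue that the consistency/fixed-point requirement forces the nontrivial normalization. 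Once the problem is reduced to ``a fixed low-degree univariate polynomial cannot be uniformly tiny on $[\mu,L]$ while meeting a nonzero normalization,'' the remaining estimate is the standard Chebyshev bound and is not where the difficulty lies.
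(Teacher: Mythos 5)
Your proposal is correct and follows essentially the same route as the paper's proof: restrict to quadratics with curvature $\eta\in[\mu,L]$ parameterized so that the minimizer is fixed, exploit obliviousness to argue that after $k$ iterations the expected iterate is a fixed degree-$(k-1)$ polynomial in $\eta$ times $\eta$, reduce to lower-bounding $\max_{\eta\in[\mu,L]}|s(\eta)\eta+1|$ (your ``$q(0)=1$'' normalization), and invoke a Chebyshev extremal argument (Lemma~\ref{lemma:smooth_and_strongly_convex_polynomials} in the strongly convex case, Lemma~\ref{lemma:smooth_polynomials} with the extra weight $\eta$ in the smooth case), with the stochastic case handled by working with $\bE\bx^{(k)}$ throughout. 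The one place you hedge --- whether the degree after $K$ steps is $K$ or $Kp$ --- resolves cleanly to $K$: each of the $p$ stacked components gains at most one factor of $\eta$ per iteration (the update is a linear combination of $A_{ij}\partial f + B_{ij}$ applied to the current $p$ points, not a composition), which is exactly what makes the bound $p$-independent; the paper establishes this by a direct induction rather than via a companion-matrix product, but the two bookkeeping devices are equivalent.
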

As discussed in the introduction, \thmref{theorem:main_lower_bound} significantly improves upon the lower obtained by \cite{arjevani2015lower} in 3 major aspects:
\begin{itemize}
	\item It holds for both smooth functions, as well as smooth and strongly convex functions.
	\item In both the strongly-convex and non-strongly convex cases, the bounds we derive are tight for $p>1$ (Note that if the coefficients are scalars and time-invariant, then for smooth and strongly convex functions a better lower bound of $\tilde{\Omega}(\kappa\ln(1/\epsilon))$ holds. See Theorem 8, \cite{arjevani2015lower}).
	\item It considers a much wider class of algorithms, namely, methods which may use different step size at each iteration and may freely update each of the $p$ points.
\end{itemize}
We stress again that, in contrast to (\ref{ineq:sqrtlb_lb}), this lower bound does not scale with the dimension of the problem.  

The proof of \thmref{theorem:main_lower_bound}, including logarithmic factors and constants which appear in the lower bound, is found in (\ref{proof:main_lower_bound}), and can be roughly sketched as follows. First, we consider $L$-smooth and $\mu$-strongly convex quadratic functions of the form
\begin{align*}
	\bx\mapsto \frac{1}{2} \bx^\top \diag{\eta}\bx + \eta \bones^\top \bx,\quad\eta\in[\mu,L],
\end{align*}
over $\reals^d$, all of which share the same minimizer, $$\bx^*= -\operatorname{diag}^{-1}(\eta) \eta \bones =-\bones.$$ Next, we observe that each iteration of $p$-CLI involves application of $A\partial f + B$, which is a linear expression in $\partial f$ whose coefficients are some linear operators, on the current points $\bx^{(k)}_j,~j=1,\dots,p$, which are then summed up to form the next iterate.  Applying this argument inductively, and setting the initialization points to be zero, we see that the point returned by the algorithm at the $k$'th iteration can be expressed as follows,
\begin{align*}
	\bx^{(k)}_p&= (s_1(\eta)\eta ,\dots,s_d(\eta)\eta)^\top,
\end{align*}
where $s_i(\eta)$ are real polynomials of degree $k-1$. Here, the fact that the coefficients are scheduled obliviously, i.e., \emph{do not} depend on the very choice of $\eta$, is crucial (when analyzing other types of $p$-CLIs, one may encounter cases where the coefficients of $s(\eta)$ are not constants, in which case the resulting expression may not be a polynomial). Bearing in mind that our goal is to bound the distance to the minimizer $-\bones$ (which, in thic case, is equivalent to the iteration complexity up to logarithmic factors), we are thus led to ask how small can $|s(\eta)\eta+1|$ be. Formally, we aim to bound 
\begin{align*}
	\max_{\eta\in[\mu,L]} |s(\eta)\eta+1|
\end{align*}
	from below. To this end, we use the properties of the well-known Chebyshev polynomials, by which we derive the following lower bound:
$$\min_{s(\eta)\in \reals[\eta], \partial(s) = k-1  } ~\max_{\eta\in[\mu,L]} |s(\eta)\eta+1|\ge\circpar{\frac{\sqrt\kappa-1}{\sqrt\kappa+1}}^{k}.$$
The proof of the smooth non-strongly convex case is also based on a reduction from a minimization problem to a polynomial approximation problem, only this time the resulting approximation problem is slightly different (see \eqref{quadratic_function_for_smooth_case} in Appendix \ref{proof:acceleration_lower_bound}).

The idea of reducing optimization bounds to polynomial approximation problems is not new, and is also found for instance in \cite{nemirovskyproblem}, where lower bounds under the oracle model are derived. In particular, both approaches, the oracle model and $p$-CLI, exploit the idea that when applied on some strongly convex quadratic functions $\frac{1}{2}\bx^\top Q\bx + \bq^\top\bx$ over $\reals^d$, the $k$'th iterate can be expressed as $s(Q)\bq$ for some real polynomial $s(\eta)\in\reals[\eta]$ of degree at most $k-1$. Bounding the iteration complexity is then essentially reduced to the question of how well can we approximate $Q^{-1}$ using such polynomials. However, the approach here uses a fundamentally different technique for achieving this, and whereas the oracle model does not impose any restrictions on the coefficients of $s(\eta)$, the framework of $p$-CLIs allows us to effectively control the way these coefficients are being produced. The excessive freedom in choosing $s(\eta)$ constitutes a major weakness in the oracle model and prevents obtaining iteration complexity bounds significantly larger than the dimension $d$. To see why, note that by the Cayley-Hamilton theorem, there exists a real polynomial $s(\eta)$ of degree at most $d-1$ such that $s(Q)=-Q^{-1}$. Therefore, the $d$'th iterate can potentially be $s(Q)\bq=-Q^{-1}\bq$, the exact minimizer. We avoid this limited applicability of the oracle model by adopting a more structural approach, which allows us to restrict the kind of polynomials which can be produced by practical optimization algorithms. Furthermore, our framework is more flexible in the sense that the coefficients of $s(\eta)$ may be formed by optimization algorithms which do not necessarily fall into the category of first-order algorithms, e.g., coordinate descent.

It is instructive to contrast our approach with another structural approach for deriving lower bounds which was proposed by \cite{nesterov2004introductory}. Nesterov \cite{nesterov2004introductory} considerably simplifies the technique employed by Nemirovsky and Yudin \cite{nemirovskyproblem} at the cost of introducing additional assumption regarding the way new iterates are generated. Specifically, it is assumed that each new iterate lies in the span of all the gradients acquired earlier. Similarly to \cite{nemirovskyproblem}, this approach also does not yield dimension-independent lower bounds. Moreover, such an approach may break in presence of conditioning mechanisms (which essentially, aim to handle poorly-conditioned functions by multiplying the corresponding gradients by some matrix). In our framework, such conditioning is handled through non-scalar coefficients. Thus, as long as the conditioning matrices depend solely on $\mu,L$ our lower bounds remain valid.

													\section{Side-Information in Oblivious Optimization} 
																	\label{section:side_information}
																
\subsection{No Strong Convexity Parameter, No Acceleration }\label{subsection:no_strong}

Below we discuss the effect of not knowing exactly the strong convexity parameter on the iteration complexity of oblivious $p$-CLIs. In particular, we show that the ability of oblivious $p$-CLIs to obtain iteration complexity which scales like $\sqrt{\kappa}$ crucially depends on the quality of the strong convexity estimate of the function under consideration. Moreover, we show that stationary $p$-CLIs are strictly weaker than general oblivious $p$-CLIs for smooth non-strongly convex functions, in the sense that stationary $p$-CLIs cannot obtain an iteration complexity of $\mathcal{O}(\sqrt{L/\epsilon})$.
%

The fact that decreasing the amount of side-information increases the iteration complexity is best demonstrated by a family of quadratic functions which we already discussed before, namely, $$\bx\mapsto\frac{1}{2}\bx^\top Q \bx + \bq^\top \bx,$$ where $Q\in\reals^{d\times d}$ is positive semidefinite whose spectrum lies in $\Sigma\subseteq\reals^+$ and $\bq\in\reals^d$. In Theorem 8 in \cite{arjevani2015lower}, it is shown that if $Q$ is given in advance, but $\bq$ is unknown, then the iteration complexity of stationary  $p$-CLIs which follows (\ref{def:basic_form}) is 
\begin{align*}
	\tilde{\Omega}(\sqrt[p]{\kappa}\ln(1/\epsilon)). 
\end{align*}
It is further shown that this lower bound is tight (see Appendix A in \cite{arjevani2015lower}). In \thmref{theorem:main_lower_bound} we show that if both the smoothness and the strong convexity parameters $\{\mu,L\}$ are known then the corresponding lower bound for this kind of algorithms is 
\begin{align*}
	\tilde{\Omega}(\sqrt{\kappa}\ln(1/\epsilon)).
\end{align*}
As mentioned earlier, this lower bound is tight and is attained by a stationary version of AGD.

However, what if only the smoothness parameter $L$ is known a-priori? The following theorem shows that in this case the iteration complexity is substantially worse. For reasons which will become clear later, it will be convenient to denote the strong convexity parameter and the condition number of a given function $f$ by $\mu(f)$ and $\kappa(f)$, respectively.
\begin{theorem}\label{theorem:acceleration_lower_bound}
Suppose that only $L$ the smoothness parameter is known, i.e. $I(\cdot)=\{L\}$. If the iteration complexity of a given oblivious, possibly stochastic, $p$-CLI optimization algorithm is 
\begin{align}
	\tilde{O}(\kappa(f)^\alpha\ln(1/\epsilon)),
\end{align}
then $\alpha\ge1$.  
\end{theorem}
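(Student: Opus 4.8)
The plan is to reuse the polynomial-approximation machinery sketched for Theorem~\ref{theorem:main_lower_bound}, but now exploiting the fact that when only $L$ is known, the coefficients $A^{(k)}_{ij}, B^{(k)}_{ij}$ of the $p$-CLI may depend on $L$ but \emph{not} on $\mu$. As before, I would restrict attention to diagonal quadratics of the form $\bx\mapsto \frac12 \bx^\top \diag{\eta}\bx + \eta\bones^\top\bx$ with all coordinates $\eta_i$ lying in some interval; here every such instance is $L$-smooth (provided the coordinates are $\le L$) and $\mu$-strongly convex with $\mu=\min_i \eta_i$, but the algorithm sees the same side-information $\{L\}$ for all of them. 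Running the induction from zero initialization exactly as in the sketch, the $p$-th coordinate of the $k$-th iterate on the instance with constant coordinate $\eta$ is $s_k(\eta)\,\eta$ for a fixed real polynomial $s_k$ of degree $k-1$ whose coefficients depend only on $L$ and $k$ — crucially the \emph{same} polynomial for every $\eta$, since the step sizes cannot adapt to $\eta$. The suboptimality on the instance with parameter $\eta$ is then, up to constants, $(s_k(\eta)\eta+1)^2\eta$ (value gap of a quadratic), and uniform $\epsilon$-accuracy over the class forces $\sup_{\eta}(s_k(\eta)\eta+1)^2\eta \lesssim \epsilon$.

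Next I would turn the hypothesized upper bound into a quantitative statement about a single polynomial. Suppose for contradiction that the algorithm achieves iteration complexity $\tilde O(\kappa(f)^\alpha \ln(1/\epsilon))$ with some $\alpha<1$. Fix a target accuracy $\epsilon$ and a parameter $\mu\in(0,L]$; applying the algorithm to the whole family of instances with coordinates in $[\mu,L]$ (so $\kappa(f)=L/\mu$ for the worst instance), the bound gives a polynomial $s_k$ with $k = \tilde O((L/\mu)^\alpha \ln(1/\epsilon))$ such that $\max_{\eta\in[\mu,L]}|s_k(\eta)\eta+1|$ is small — say at most $\sqrt{\epsilon/\mu}$ after absorbing the $\eta\le L$ factor, which is $\le$ any prescribed constant $\delta<1$ once $\epsilon$ is small enough relative to $\mu$. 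So we would have a degree-$(k-1)$ polynomial $q(\eta):=s_k(\eta)\eta+1$ with $q(0)=1$ and $|q(\eta)|\le\delta$ on $[\mu,L]$.

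The core is then a lower bound on the degree of any such polynomial. The standard Chebyshev argument (the same one invoked for the $\tilde\Omega(\sqrt\kappa\ln(1/\epsilon))$ bound) shows that a polynomial equal to $1$ at $0$ and bounded by $\delta$ in sup-norm on $[\mu,L]$ must have degree $\Omega(\sqrt{L/\mu}\,\ln(1/\delta))$; this is essentially tight, so it does \emph{not} by itself contradict $\alpha<1$. The extra leverage must come from the fact that the \emph{same} polynomial $s_k$ has to work simultaneously for \emph{all} strong-convexity values, i.e. we need $|s_k(\eta)\eta+1|$ to be small not just on one interval $[\mu,L]$ but, because the algorithm cannot know $\mu$, on intervals $[\mu',L]$ for a whole range of $\mu'$, with the accuracy/degree tradeoff dictated by $\kappa(f)=L/\mu'$ on each. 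Concretely, along a single infinite run of the algorithm the iterates $\{s_k\}_k$ must drive $|s_k(\eta)\eta+1|$ below the threshold corresponding to $\kappa=L/\eta$ within $k=\tilde O((L/\eta)^\alpha\ln(1/\epsilon))$ steps, \emph{for every} $\eta$ at once. I would pick a geometric grid $\eta_j = L\rho^{j}$ and use the resulting family of constraints on the single sequence $(s_k)$ to derive, via the Chebyshev/Remez-type estimate applied at scale $\eta_j$, a constraint of the form "degree needed to reach accuracy on $[\eta_j,L]$ is $\gtrsim\sqrt{L/\eta_j}$," and show this is incompatible with the claimed $(L/\eta_j)^\alpha$ for $\alpha<1$ as $j\to\infty$.

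The main obstacle I expect is precisely making this last step rigorous: reconciling "a single run / a single polynomial sequence" with "a family of $\kappa$-dependent deadlines" requires care, because the hypothesized upper bound $\tilde O(\kappa^\alpha\ln(1/\epsilon))$ is a statement about worst-case performance over the class, and one must argue that a genuinely $\mu$-oblivious schedule cannot meet the easy (large-$\mu$) deadline and the hard (small-$\mu$) deadline with the same non-adaptive coefficients unless the degree already grows like $\sqrt\kappa$. I anticipate the cleanest route is contrapositive and self-referential: instantiate the hypothesized bound at a well-chosen $\mu$ and $\epsilon$ to extract the polynomial $q$ with $q(0)=1$, $|q|\le\delta$ on $[\mu,L]$ and $\deg q \le \tilde O((L/\mu)^\alpha\ln(1/\delta))$; then invoke the Chebyshev lower bound $\deg q = \Omega(\sqrt{L/\mu}\ln(1/\delta))$; letting $L/\mu\to\infty$ forces $(L/\mu)^\alpha \gtrsim \sqrt{L/\mu}$, hence $\alpha\ge 1/2$ immediately and, after sharpening the accuracy bookkeeping (the $\eta\le L$ weight and the fact that $\epsilon$ must be taken polynomially small in $\mu$ to keep the right-hand side meaningful), $\alpha\ge 1$. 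Getting the logarithmic factors and the $\epsilon$-versus-$\mu$ regime exactly right so that the $\tilde O$ absorbs what it should — and nothing it shouldn't — is where the technical bookkeeping will concentrate; the non-strongly-convex comparison quadratic of \eqref{quadratic_function_for_smooth_case} referenced in Appendix~\ref{proof:acceleration_lower_bound} is the natural device for the limiting $\mu\to 0$ endpoint.
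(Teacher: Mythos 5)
Your setup is correct and matches the paper's: restrict to the diagonal quadratics $\bx\mapsto\tfrac12\eta\|\bx\|^2+\eta\bones^\top\bx$, note that for an oblivious scheme with $I(\cdot)=\{L\}$ the $k$-th iterate on this instance has the form $s_k(\eta)\eta$ for a polynomial $s_k$ of degree $\le k-1$ whose coefficients depend only on $L$ and $k$, and reduce everything to lower-bounding $|s_k(\eta)\eta+1|$. That part is right, and you correctly identify the crux: the \emph{same} polynomial sequence must serve every $\mu\in(0,L)$.

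The gap is exactly where you flag it, and I do not think it can be closed along the route you propose. The Chebyshev (equioscillation) lower bound on a degree-$(k-1)$ polynomial $q$ with $q(0)=1$ and $|q|\le\delta$ on $[\mu,L]$ gives $k=\Omega(\sqrt{\kappa}\,\ln(1/\delta))$, and this is essentially \emph{tight} for a polynomial tailored to $[\mu,L]$; it therefore only yields $\alpha\ge 1/2$, as you observe. Your proposed ``sharpening'' — playing with the $\eta\le L$ weight and forcing $\epsilon$ to be polynomially small in $\mu$ — only alters the argument of the logarithm in $\ln(1/\delta)$, which cannot change the power of $\kappa$; pushing $\kappa\to\infty$ still gives $\kappa^\alpha\gtrsim\sqrt{\kappa}$ and nothing more. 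The geometric-grid idea has the additional structural problem that the constraints it produces are on \emph{different} polynomials $p_{k_j}$ (one per deadline $k_j$), not a single polynomial over a family of intervals, so there is no obvious way to aggregate them into a stronger degree bound. In short, a uniform-approximation argument of Chebyshev type does not see the extra information that $s_k$ cannot depend on $\mu$, and this is precisely the $\sqrt{\kappa}$-vs-$\kappa$ gap.

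The paper takes a genuinely different route and, in fact, explicitly notes (end of Section 4.1) that, unlike Theorem~\ref{theorem:main_lower_bound}, the proof of Theorem~\ref{theorem:acceleration_lower_bound} does \emph{not} use Chebyshev polynomials. The key device is Lemma~\ref{lemma:smooth_and_unknown_strongly_convex_polynomials}, a rigidity statement: for any $s\in\cP_k$, either there exist $\eta$ arbitrarily close to $L$ with $|s(\eta)\eta+1|>(1-\eta/L)^{k+1}$, or else $s(\eta)\eta+1\equiv(1-\eta/L)^{k+1}$ identically. Its proof is purely algebraic — reparametrize $q(\eta)=s(L(1-\eta))L(1-\eta)+1$ and show by examining Taylor coefficients at $0$ that $|q(\eta)|\le\eta^{k+1}$ near $0$ forces $q(\eta)=\eta^{k+1}$. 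This encodes the obliviousness constraint in a way the Chebyshev bound cannot: the comparison function $(1-\eta/L)^{k+1}$ is exactly the gradient-descent polynomial, which already needs $\Theta(\kappa\ln(1/\epsilon))$ iterations, and the lemma says any oblivious scheme is (pointwise, near $\eta=L$) no better. So rather than ``the Chebyshev bound plus extra bookkeeping,'' what you need is this alternative, elementary polynomial lemma; the reference you gesture at, Equation~\eqref{quadratic_function_for_smooth_case}, actually belongs to the smooth ($\mu=0$) branch of the proof of Theorem~\ref{theorem:main_lower_bound} and is not the tool used here.
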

\thmref{theorem:acceleration_lower_bound} pertains to the important issue of optimal schedules for step sizes. Concretely, it implies that, in the absence of the strong convexity parameter, one is still able to schedule the step sizes according to the smoothness parameter so as to obtain exponential convergence rate, but only to the limited extent of linear dependency on the condition number (as mentioned before, this sub-optimality in terms of dependence on the condition number, can be also found in \cite{schmidt2013minimizing} and \cite{defazio2014saga}). This bound is tight and is attained by standard gradient descent (GD). 

\thmref{theorem:acceleration_lower_bound} also emphasizes the superiority of standard GD in cases where the true strong convexity parameter is poorly estimated. Such situations may occur when one underestimate the true strong convexity parameter by following the strong convexity parameter introduced by an explicit regularization term. Specifically, if $\hat{\mu}$ denotes our estimate for 
the true strong convexity parameter $\mu$ (obviously, $\hat{\mu}<\mu$  to ensure convergence), then \thmref{theorem:main_lower_bound} already implies that, for a fixed accuracy level, the worst iteration complexity of our algorithm is on the order of $\sqrt{L/\hat{\mu}}$, whereas standard GD with $1/L$ step sizes has iteration complexity on the order of $L/\mu$. Thus, if our estimate is too conservative, i.e.,  $\hat{\mu}<\mu^2/L$, then the iteration complexity of GD is $\mu/\sqrt{L\hat{\mu}}\ge1$ times better. \thmref{theorem:acceleration_lower_bound} further strengthen this statement, by indicating that if our estimate does not depend on the true strong convexity parameter, then the iteration complexity of GD is even more favorable with a factor of $\mu/\hat{\mu}\ge1$, compared to our algorithm.

The proof of \thmref{theorem:acceleration_lower_bound}, which appears in Appendix \ref{proof:acceleration_lower_bound}, is again based on a reduction to an approximation problem via polynomials. In contrast to the proof of Theorem \ref{theorem:main_lower_bound} which employs Chebyshev polynomials, here only elementary algebraic manipulations are needed.

Another implication of \thmref{theorem:acceleration_lower_bound} is that the coefficients of optimal stationary $p$-CLIs for smooth and strongly convex functions must have an explicit dependence on the strong convexity parameter. In the next section we shall see that this fact is also responsible for the inability of stationary $p$-CLIs to obtain a rate of $\cO(\sqrt{L/\epsilon})$ for $L$-smooth convex functions.

									\subsection{No Acceleration for Stationary Algorithms over Smooth Convex Functions}
											\label{subsection:acceleration_and_stationarity}
Below, we prove that, as opposed to oblivious $p$-CLIs, stationary $p$-CLIs (namely, $p$-CLIs with time-invariant coefficients) over $L$-smooth convex functions can obtain an iteration complexity no better than $\cO(L/\epsilon)$. An interesting implication of this is that some current methods for minimizing finite sums of functions, such as SAG and SAGA (which are in fact stationary $p$-CLIs) cannot be optimal in this setting, and that time-changing coefficients are essential to get optimal rates. This further motivates the use of current acceleration schemes (e.g., \cite{frostig2015regularizing,lin2015universal}) which turn a given stationary algorithm into an non-stationary oblivious one.

The proof of this result is based on a reduction from the class of $p$-CLIs over $L$-smooth convex functions to $p$-CLIs over $L$-smooth and $\mu$-strongly convex, where the strong convexity parameter is given explicitly. This reduction allows us to apply the lower bound in \thmref{theorem:acceleration_lower_bound} on $p$-CLIs designed for smooth non-strongly convex functions.

We now turn to describe the reduction in detail. In his seminal paper, Nesterov \cite{nesterov1983method} presents the AGD algorithm and shows that it obtains a convergence rate of
\begin{align}
	f(\bx^k)-f(\bx^*)\le \frac{4L\norm{\bx^0-\bx^*}^2}{(k+2)^2}
\end{align}
for $L$-smooth convex functions, which admits at least one minimizer (accordingly, throughout the rest of this section we shall assume that the functions under consideration admit at least one minimizer, i.e., $\argmin(f)\neq\emptyset$). In addition, Nesterov proposes a restarting scheme of this algorithm which, assuming the strong convexity parameter is known, allows one to obtain an iteration complexity of $\bigtO{\sqrt{\kappa}\ln (1/\epsilon)}$. Scheme \ref{table:restart_scheme} shown below forms a simple generalization of the scheme discussed in that paper, and allows one to explicitly introduce a strong convexity parameter into the dynamics of (not necessarily oblivious) $p$-CLIs over $L$-smooth convex functions. 

\begin{table}[ht]
\vskip 0.15in
\begin{center}
\begin{small}
\begin{sc}
\begin{tabular}{llccr}\label{table:restart_scheme}
\small \textbf{Scheme 4.2} &\small Restarting Scheme \\
\hline
\scriptsize \textbf{Parameters} &\scriptsize  $\centerdot$ Smoothness parameter $L>0$\\
 &\scriptsize  $\centerdot$ Strong convexity parameter $\mu>0$\\
&\scriptsize  $\centerdot$ Convergence parameters $\alpha>0,C>0$\\
\scriptsize \textbf{Given}& \scriptsize A $p$-CLI over $L$-smooth functions $\cP$ with \\
&\scriptsize  \quad $f(\bx^k)-f^*\le \frac{CL\norm{\bar{\bx}^0-\bx^*}^2}{k^\alpha}$\\
&\scriptsize \quad for any initialization vector $\bar{\bx}^0$\\
\scriptsize \textbf{Iterate} &\scriptsize for $t=1,2,\dots$\\
&\scriptsize \quad Restart the step size schedule of $\cP$ \\
&\scriptsize \quad Initialize $\cP$ at $\bar{\bx}^0$ \\
&\scriptsize \quad Run $\cP$  for $\sqrt[\alpha]{4CL/\mu}$ iterations\\
&\scriptsize \quad Set $\bar{\bx}^0$ to be the last iterate of this execution\\
\scriptsize \textbf{End}\\
\hline
\end{tabular}
\end{sc}
\end{small}
\end{center}
\vskip -0.1in

\end{table}
The following lemma provides an upper bound on the iteration complexity of $p$-CLIs obtained through Scheme \ref{table:restart_scheme}.

\begin{lemma}\label{lemma:restart_scheme}
The convergence rate of a $p$-CLI algorithm obtained by applying Scheme \ref{table:restart_scheme}, using the corresponding set of parameters $L,\mu,C,\alpha$, is 
\begin{align*}
	\bigtO{\sqrt[\alpha]{{\kappa}}\ln(1/\epsilon)},
\end{align*} 
where $\kappa=L/\mu$ denotes the condition number.
\end{lemma}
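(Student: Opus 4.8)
The plan is to analyze the restarting scheme directly: each outer iteration $t$ runs the given $p$-CLI $\mathcal{P}$ for $N := \sqrt[\alpha]{4CL/\mu}$ inner iterations on a freshly restarted step-size schedule, starting from the current point $\bar\bx^0$. The key observation is that one outer iteration contracts the function suboptimality by a constant factor. Indeed, by the assumed convergence rate of $\mathcal{P}$, after $N$ inner iterations starting from $\bar\bx^0$ the new point $\bar\bx^0_{\mathrm{new}}$ satisfies
\begin{align*}
	f(\bar\bx^0_{\mathrm{new}}) - f^* \le \frac{CL\norm{\bar\bx^0-\bx^*}^2}{N^\alpha} = \frac{CL\norm{\bar\bx^0-\bx^*}^2}{4CL/\mu} = \frac{\mu}{4}\norm{\bar\bx^0-\bx^*}^2.
\end{align*}
First I would invoke $\mu$-strong convexity of $f$ to convert the distance $\norm{\bar\bx^0-\bx^*}^2$ into function suboptimality: strong convexity (together with $\nabla f(\bx^*)=0$) gives $f(\bar\bx^0)-f^* \ge \frac{\mu}{2}\norm{\bar\bx^0-\bx^*}^2$, hence $\norm{\bar\bx^0-\bx^*}^2 \le \frac{2}{\mu}(f(\bar\bx^0)-f^*)$. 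Substituting, $f(\bar\bx^0_{\mathrm{new}}) - f^* \le \frac{1}{2}(f(\bar\bx^0)-f^*)$.

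Next I would iterate this contraction over the outer loop. Writing $\delta_t$ for $f(\bar\bx^0)-f^*$ after $t$ outer iterations, we get $\delta_t \le 2^{-t}\delta_0$, so to reach accuracy $\epsilon$ it suffices to take $t = \lceil \log_2(\delta_0/\epsilon)\rceil = O(\ln(1/\epsilon))$ outer iterations (absorbing the $\log$ of the initial suboptimality, which depends on the initialization, into the $\tilde O$ notation as the paper does throughout). Each outer iteration costs $N = \sqrt[\alpha]{4CL/\mu} = O(\sqrt[\alpha]{\kappa})$ inner iterations of $\mathcal{P}$, with $C,\alpha$ treated as constants. Multiplying, the total number of iterations of $\mathcal{P}$ is $O(\sqrt[\alpha]{\kappa}) \cdot O(\ln(1/\epsilon)) = \bigtO{\sqrt[\alpha]{\kappa}\ln(1/\epsilon)}$, which is the claimed bound. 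I would also note that the resulting algorithm is itself a $p$-CLI over $L$-smooth functions, since restarting and concatenating $p$-CLI dynamics with coefficients depending only on $L$ and $\mu$ again yields an (oblivious) $p$-CLI — this is what makes the reduction in Section~\ref{subsection:acceleration_and_stationarity} legitimate, though for the lemma statement itself only the rate matters.

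The main obstacle is a minor bookkeeping one rather than a conceptual one: handling the fact that $N = \sqrt[\alpha]{4CL/\mu}$ need not be an integer, so one actually runs $\lceil N\rceil$ inner iterations, and one must check the contraction still goes through with a possibly-worse-than-$1/2$ but still absolutely-bounded-below-$1$ constant factor (e.g. $3/4$), which only changes the constant hidden in $\tilde O$. A second point requiring a line of care is that the convergence guarantee for $\mathcal{P}$ is stated for the returned iterate, so one should make sure the "last iterate of this execution" that the scheme feeds forward is exactly the iterate to which the rate applies; the definition of iteration complexity in Definition~\ref{definition:iteration_complexity} (return $\bx^{(k)}_p$) makes this consistent. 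Beyond these, the argument is the standard restart-doubling analysis and requires no Chebyshev machinery or anything deep.
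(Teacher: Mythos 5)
Your proof is correct and follows essentially the same argument as the paper: run $\sqrt[\alpha]{4CL/\mu}$ inner iterations, use strong convexity to convert the distance-to-optimum bound into a factor-$1/2$ contraction of function suboptimality, and iterate for $O(\ln(1/\epsilon))$ outer loops. The extra remarks about ceiling the inner iteration count and matching the returned iterate are minor bookkeeping points that the paper elides, and do not change the argument.
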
 
\begin{proof}
Suppose $\cP$ is a $p$-CLI as stated in Scheme \ref{table:restart_scheme} and let $f$ be a $L$-smooth and $\mu$-strongly convex function.
Each external iteration in this scheme involves running $\cP$ for $k=\sqrt[\alpha]{4CL/\mu}$ iterations, Thus, for any arbitrary point $\bar{\bx}$,
\begin{align*}
	f(\bx^{(k)})-f^*&\le \frac{CL\norm{\bar{\bx}-\bx^*}^2}{(\sqrt[\alpha]{4CL/\mu})^\alpha}=\frac{\norm{\bar{\bx}-\bx^*}^2}{4/\mu}.
\end{align*}
Also, $f$ is $\mu$-strongly convex, therefore
\begin{align*}
	f(\bx^{(k)})-f^*&\le \frac{ 2(f(\bar{\bx})-f(\bx^*))/\mu   } {4/\mu}
	\le \frac{ f(\bar{\bx})-f(\bx^*)   } {2}.
\end{align*}
That is, after each external iteration the sub-optimality in the objective value is halved. Thus, after $T$ external iterations, we get
\begin{align*}
	f(\bx^{(T\sqrt[\alpha]{4CL/\mu})})-f^*&\le \frac{ f(\bar{\bx}^0)-f(\bx^*)   } {2^T},
\end{align*}
where $\bar{\bx}^0$ denotes some initialization point. Hence, the iteration complexity for obtaining an $\epsilon$-optimal solution is
\begin{align*}
	\sqrt[\alpha]{4C\kappa}\log_2\circpar{ \frac{f(\bar{\bx}^0)-f(\bx^*)  }{\epsilon}}.
\end{align*}
\end{proof}											


The stage is now set to prove the statement made at the beginning of this section. Let $\cP$ be a stationary $p$-CLI over $L$-smooth functions with a convergence rate of $\bigO{L/k^\alpha}$, and let $\mu\in (0,L)$ be the strong convexity parameter of the function to be optimized. We apply Scheme \ref{table:restart_scheme} to obtain a new $p$-CLI, which according to \lemref{lemma:restart_scheme}, admits an iteration complexity of  $\bigO{\sqrt[\alpha]{\kappa}\ln(1/\epsilon)}$. But, since $\cP$ is stationary, the resulting $p$-CLI under Scheme \ref{table:restart_scheme} is again $\cP$ (That is, stationary $p$-CLIs are invariant w.r.t. Scheme \ref{table:restart_scheme}). Now, $\cP$ is a $p$-CLI over smooth non-strongly convex, and as such, its coefficients do not depend on $\mu$. Therefore, by \thmref{theorem:acceleration_lower_bound}, we get that $\alpha\le1$. Thus, we arrive at the following corollary:
\begin{corollary}
If the iteration complexity of a given stationary $p$-CLI over $L$-smooth functions is 
\begin{align*}
\bigO{\sqrt[\alpha]{L/\epsilon}},
\end{align*} 
then $\alpha\le1$.
\end{corollary}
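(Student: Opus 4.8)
The plan is to argue by contradiction, using the restarting scheme (Scheme \ref{table:restart_scheme}) as a bridge from the smooth regime back to the smooth-and-strongly-convex regime, where \thmref{theorem:acceleration_lower_bound} already forbids fast $\kappa$-dependence. So suppose a stationary $p$-CLI $\cP$ over $L$-smooth functions has iteration complexity $\bigO{\sqrt[\alpha]{L/\epsilon}}$ with $\alpha>1$; equivalently, $\cP$ enjoys a convergence rate of the form $f(\bx^{(k)})-f^*\le CL\norm{\bar\bx^0-\bx^*}^2/k^\alpha$ for some constant $C>0$ and every initialization.

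First I would feed $\cP$ into Scheme \ref{table:restart_scheme} with the smoothness parameter $L$, an arbitrary $\mu\in(0,L)$, and the convergence constants $C,\alpha$. By \lemref{lemma:restart_scheme}, the $p$-CLI $\cP'$ produced in this way converges on $L$-smooth $\mu$-strongly convex functions with iteration complexity $\bigtO{\sqrt[\alpha]{\kappa}\ln(1/\epsilon)}$, i.e., of the form $\tilde O(\kappa(f)^{1/\alpha}\ln(1/\epsilon))$ with exponent $1/\alpha$ on the condition number.

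The crux of the argument is the next step: since $\cP$ is \emph{stationary}, $\cP'$ is again a bona fide oblivious $p$-CLI whose coefficients depend only on $L$, that is, with side-information $I(\cdot)=\{L\}$. The reason is that restarting a time-invariant step-size schedule returns the very same schedule, so Scheme \ref{table:restart_scheme} introduces no $\mu$-dependence into the coefficients $A^{(k)}_{ij},B^{(k)}_{ij}$ — it only re-initializes the iterate sequence. Hence $\cP'$ inherits exactly the coefficients of $\cP$, which are functions of $L$ alone. I expect this "invariance" claim to be the main obstacle: one has to track how the $p$ stored points are re-initialized at each restart and confirm that this bookkeeping genuinely keeps the coefficients independent of $\mu$.

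Finally I would invoke \thmref{theorem:acceleration_lower_bound}: any oblivious, possibly stochastic, $p$-CLI with $I(\cdot)=\{L\}$ whose iteration complexity is $\tilde O(\kappa(f)^{\alpha'}\ln(1/\epsilon))$ must satisfy $\alpha'\ge1$. Applying this to $\cP'$ with $\alpha'=1/\alpha$ gives $1/\alpha\ge1$, hence $\alpha\le1$, contradicting the assumption $\alpha>1$. Once the invariance step is nailed down, the remainder is a direct chaining of \lemref{lemma:restart_scheme} and \thmref{theorem:acceleration_lower_bound}.
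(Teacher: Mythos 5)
Your proof matches the paper's argument essentially step for step: translate the $\bigO{\sqrt[\alpha]{L/\epsilon}}$ iteration complexity into a convergence rate, feed the stationary algorithm into Scheme \ref{table:restart_scheme}, invoke \lemref{lemma:restart_scheme} to get $\bigtO{\kappa^{1/\alpha}\ln(1/\epsilon)}$, observe that stationarity makes the restarted algorithm identical to $\cP$ (hence still oblivious with side-information $\{L\}$), and apply \thmref{theorem:acceleration_lower_bound} to force $1/\alpha\ge 1$. The only cosmetic difference is that you frame it as a contradiction while the paper argues directly, and you correctly flag the restart-invariance of stationary schedules as the one point worth verifying carefully.
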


The lower bound above is tight and is attained by standard Gradient Descent. 


																	\section{Summary}
In this work, we propose the framework of first-order $p$-CLIs and show that it can be efficiently utilized to derive bounds on the iteration complexity of a wide class of optimization algorithms, namely, oblivious, possibly stochastic, $p$-CLIs over smooth and strongly-convex functions. 

We believe that these results are just the tip of the iceberg, and the generality offered by this framework can be successfully instantiated for many other classes of algorithms. For example, it is straightforward to derive a lower bound of $\Omega(1/\epsilon)$ for 1-CLIs over 1-Lipschitz (possibly non-smooth) convex functions using the following set of functions
\begin{align*}
	\left\{ \|\bx-\bc\|\middle| \bc\in \reals^d \right\}.
\end{align*}
How to derive a lower bound for other types of $p$-CLIs in the non-smooth setting is left to future work.

\paragraph{Acknowledgments:}
This research is supported in part by an FP7 Marie Curie CIG grant, the Intel ICRI-CI Institute, and Israel
Science Foundation grant 425/13. We thank Nati Srebro for several helpful discussions and insights.

\bibliography{../../../mybib}
\bibliographystyle{plain}

\newpage
\appendix
\onecolumn

\section{Proofs}
\subsection{Proof for \thmref{theorem:main_lower_bound}} \label{proof:main_lower_bound}
Let us apply the given oblivious $p$-CLI algorithm on a quadratic function of the form $$f:\reals^d\to\reals:~\bx\mapsto \frac{1}{2}\bx^\top Q \bx + \bq^\top \bx,$$ where $Q=\diag{\eta,\dots,\eta}$ and $\bq=-\bv \eta$ for some $\eta\in[\mu,L]$ and $\bv\neq0\in\reals^d$. In particular, we have that the norm of the unique minimizer is $\norm{\bx^*}=\norm{-Q^{-1}\bq}=\norm{\bv}$.  We set the initialization points to be zero, i.e., $\bE \bx_j =0,~j=1,\dots,p$, and denote the corresponding coefficients by $A^{(k)}_{ij},B^{(k)}_{ij}\in \reals^{d\times d}$. The crux of proof is that, as long as $\eta$ lies in $[\mu,L]$, the side-information $\{\mu,L\}$ remains consistent, and therefore, the coefficients remain unchanged. \\

First, we express $\bE \bx_i^{k+1}$ in terms of $Q,\bq$ and $\bE \bx^{(k)}_{1},\dots,\bE \bx^{(k)}_{p} \in \reals^d$. By \defref{definition:iteration_complexity} we have for any $i\in[p]$,
\begin{align*}
	\bE\bx^{k+1}_i &= \sum_{j=1}^{p} \circpar{A^{(k)}_{ij} \partial f  + B^{(k)}_{ij} }(\bE\bx^{(k)}_{j})\\
	&= \sum_{j=1}^{p} (A^{(k)}_{ij}\partial f(\bE\bx^{(k)}_j) +B^{(k)}_{ij}\bE\bx^{(k)}_j)\\
	&= \sum_{j=1}^{p} (A^{(k)}_{ij}(Q\bE\bx^{(k)}_j+ \bq)+B_{ij}\bE\bx^{(k)}_j)\\
	&= \sum_{j=1}^{p} (A^{(k)}_{ij}Q + B^{(k)}_{ij})\bE\bx^{(k)}_j + \sum_{j=1}^{p} A^{(k)}_{ij} \bq.
\end{align*}
Our next step is to reduce the problem of minimizing $f$ to a polynomial approximation problem. 
We claim that 
for any $k\ge1$ and $i\in[d]$  there exist $d$ real polynomials $s_{k,i,1}(\eta),\dots,s_{k,i,d}(\eta)$ of degree at most $k-1$, such that 
\begin{align} \label{eq:induction_claim1}
	\bE \bx_i^{(k)}   = (s_{k,i,*}(\eta))\eta,
\end{align}
where $$(s_{k,i,*}(\eta)) \coloneqq (s_{k,i,1}(\eta),\dots,s_{k,i,d}(\eta))^\top.$$
Let us prove this claim using mathematical induction. For $k=1$ we have
\begin{align} \label{eq:induction_basis_1}
	\bE\bx^{(1)}_i 	&= \sum_{j=1}^{p} (A^{(0)}_{ij}Q + B^{(0)}_{ij})\bE\bx^{(0)}_j + \sum_{j=1}^{p} A^{(0)}_{ij} \bq 
	 	= -\sum_{j=1}^{p} A^{(0)}_{ij} \bv\eta,
\end{align}
showing that the base case holds. For the induction step, assume the statement 
holds for some $k>1$ with $s_{k,i,j}(\eta)$ as above. Then,
\begin{align} \label{eq:induction_step_1}
	\bE\bx^{(k+1)}_i 	&= \sum_{j=1}^{p} (A^{(k)}_{ij}Q + B^{(k)}_{ij})\bE\bx^{(k)}_j + \sum_{j=1}^{p} A^{(k)}_{ij} \bq \nonumber\\ 
	&= \sum_{j=1}^{p} (A^{(k)}_{ij}\diag{\eta,\dots,\eta} + B^{(k)}_{ij}) (s_{k,j,*}(\eta))\eta - \sum_{j=1}^{p} A^{(k)}_{ij} \bv\eta \nonumber\\
	&= \left(\sum_{j=1}^{p} (A^{(k)}_{ij}\diag{\eta,\dots,\eta} + B^{(k)}_{ij}) (s_{k,j,*}(\eta)) 
	-\sum_{j=1}^{p} A^{(k)}_{ij} \bv\right) \eta.
\end{align}
The expression inside the last parenthesis is a vector with $d$  entries, each of which contains a real polynomial of degree at most $k$. This concludes the induction step (note that the derivations of equalities (\ref{eq:induction_basis_1}) and (\ref{eq:induction_step_1}) above are exactly where we use the fact that there is no functional dependency of $A^{(k)}_{ij}$ and $B^{(k)}_{ij}$ on $\eta$).

We are now ready to estimate the sub-optimality of $\bE\bx^{(k)}_p$, the expected point 
returned by the algorithm at the $k$'th iteration. Setting $m\in\argmax_j |\bv_j|$ we have
\begin{align} \label{ineq:argument_sub_optimal}
	\norm{\bE\bx_p^{(k)}-\bx^*}&= \| (s_{k,p,*}(\eta))\eta + \bv\|\ge | s_{k,p,m}(\eta)\eta + v_m|
	= |v_m||s_{k,p,m}(\eta)\eta/v_m + 1|.
\end{align}
By \lemref{lemma:smooth_and_strongly_convex_polynomials} in appendix B, there exists $\eta\in[\mu,L]$, such that 
\begin{align*}
	|s_{k,p,m}(\eta)\eta/v_m + 1|\ge  \circpar{\frac{\sqrt{\kappa}-1}{\sqrt{\kappa}+1}}^{k},
\end{align*}
where $\kappa=L/\mu$. Defining $Q$ and $\bq$ accordingly, and choosing, e.g., $\bv=R\be_1$ where $R$ denotes a prescribed distance, yields 
\begin{align*}
	\norm{\bE\bx_p^{(k)}-\bx^*}&\ge R \circpar{\frac{\sqrt{\kappa}-1}{\sqrt{\kappa}+1}}^{k}.
\end{align*}
Using the fact that $f$ is $\mu$-strongly convex concludes the proof of the first part of the theorem. 

For the smooth case we need to estimate $f(\bE\bx_p^{(k)})-f^*$. Let $\eta\in(0,L]$ and 
define $Q$ and $\bq$, accordingly. \ineqref{ineq:argument_sub_optimal} yields
\begin{align} \label{quadratic_function_for_smooth_case}
	 f(\bE\bx^{(k)}) - f(\bx^*) &=  \frac{1}{2}(\bE\bx^{(k)} - \bx^*)^\top Q(\bE\bx^{(k)} - \bx^*)\\
	&\ge  \frac{(v_m)^2}{2}\eta ((s_{k,p,m})(\eta)\eta/v_m + 1)^2. \nonumber
\end{align}
Now, by \lemref{lemma:smooth_polynomials} in appendix B,
\begin{align*}
	\min_{s(\eta),~\partial s \le k-1} \max_{\eta\in(0,L]}\eta(s(\eta)\eta + 1)^2 \ge \frac{L}{(2k+1)^2}
\end{align*}
Thus, choosing $\bv=R\be_1$ concludes the proof.

\subsection{Proof for \thmref{theorem:acceleration_lower_bound}} \label{proof:acceleration_lower_bound}

The proof of this theorem follows the exact reduction used in the proof of \thmref{theorem:main_lower_bound} (see Appendix \ref{proof:main_lower_bound} above). The only difference is that here $\mu$ is allowed to be any real number in $(0,L)$. This consideration reduces our problem into, yet another,  polynomial approximation problem. For completeness, we provide here the full proof.

Let us apply the given oblivious $p$-CLI algorithm on a quadratic function of the form $$f:\reals^d\to\reals:~\bx\mapsto \frac{1}{2}\bx^\top Q \bx + \bq^\top \bx,$$ where $Q=\diag{\eta,\dots,\eta}$ and $\bq=-\bv \eta$ for some $\eta\in(0,L)$ and $\bv\neq0\in\reals^d$. In particular, we have that the norm of the unique minimizer is $\norm{\bx^*}=\norm{-Q^{-1}\bq}=\norm{\bv}$.  We set the initialization points to be zero, i.e., $\bE \bx_j =0,~j=1,\dots,p$, and denote the corresponding coefficients by $A^{(k)}_{ij},B^{(k)}_{ij}\in \reals^{d\times d}$. The crux of proof is that, as long as $\eta$ lies in $(0,L]$, the side-information $\{\mu,L\}$ remains consistent, and therefore, the coefficients remain unchanged. \\

First, we express $\bE \bx_i^{k+1}$ in terms of $Q,\bq$ and $\bE \bx^{(k)}_{1},\dots,\bE \bx^{(k)}_{p} \in \reals^d$. By \defref{definition:iteration_complexity} we have for any $i\in[p]$,
\begin{align*}
	\bE\bx^{k+1}_i &= \sum_{j=1}^{p} \circpar{A^{(k)}_{ij} \partial f  + B^{(k)}_{ij} }(\bE\bx^{(k)}_{j})\\
	&= \sum_{j=1}^{p} (A^{(k)}_{ij}\partial f(\bE\bx^{(k)}_j) +B^{(k)}_{ij}\bE\bx^{(k)}_j)\\
	&= \sum_{j=1}^{p} (A^{(k)}_{ij}(Q\bE\bx^{(k)}_j+ \bq)+B_{ij}\bE\bx^{(k)}_j)\\
	&= \sum_{j=1}^{p} (A^{(k)}_{ij}Q + B^{(k)}_{ij})\bE\bx^{(k)}_j + \sum_{j=1}^{p} A^{(k)}_{ij} \bq 
\end{align*}
Our next step is to reduce the problem of minimizing $f$ to a polynomial approximation problem. 
We claim that 
for any $k\ge1$ and $i\in[d]$  there exist $d$ real polynomials $s_{k,i,1}(\eta),\dots,s_{k,i,d}(\eta)$ of degree at most $k-1$, such that 
\begin{align} \label{eq:induction_claim_2}
	\bE \bx_i^{(k)}   = (s_{k,i,*}(\eta))\eta,
\end{align}
where $$(s_{k,i,*}(\eta)) \coloneqq (s_{k,i,1}(\eta),\dots,s_{k,i,d}(\eta))^\top.$$
Let us prove this claim using mathematical induction. For $k=1$ we have,
\begin{align} \label{eq:induction_basis_2}
	\bE\bx^{(1)}_i 	&= \sum_{j=1}^{p} (A^{(0)}_{ij}Q + B^{(0)}_{ij})\bE\bx^{(0)}_j + \sum_{j=1}^{p} A^{(0)}_{ij} \bq = -\sum_{j=1}^{p} A^{(0)}_{ij} \bv\eta,
\end{align}
showing that the base case holds. For the induction step, assume the statement 
holds for some $k>1$ with $s_{k,i,j}(\eta)$ as above, then
\begin{align} \label{eq:induction_step_2}
	\bE\bx^{(k+1)}_i 	&= \sum_{j=1}^{p} (A^{(k)}_{ij}Q + B^{(k)}_{ij})\bE\bx^{(k)}_j + \sum_{j=1}^{p} A^{(k)}_{ij} \bq \nonumber\\ 
	&= \sum_{j=1}^{p} (A^{(k)}_{ij}\diag{\eta,\dots,\eta} + B^{(k)}_{ij}) (s_{k,j,*}(\eta))\eta \nonumber\\
	&- \sum_{j=1}^{p} A^{(k)}_{ij} \bv\eta \nonumber\\
	&= \left(\sum_{j=1}^{p} (A^{(k)}_{ij}\diag{\eta,\dots,\eta} + B^{(k)}_{ij}) (s_{k,j,*}(\eta)) \right.\nonumber\\
	&- \left.\sum_{j=1}^{p} A^{(k)}_{ij} \bv\right) \eta.
\end{align}
The expression inside the last parenthesis is a vector with $d$  entries, each of which contains a real polynomial of degree at most $k$. This concludes the induction step. (note that the derivations of equalities (\ref{eq:induction_basis_2}) and (\ref{eq:induction_step_2}) above are exactly where we use the fact that there is no functional dependency of $A^{(k)}_{ij}$ and $B^{(k)}_{ij}$ on $\eta$).

We are now ready to estimate the sub-optimality of $\bE\bx^{(k)}_p$, the point 
returned by the algorithm at the $k$'th iteration. Let us set $m\in\argmax_j |\bv_j|$, then 
\begin{align} \label{ineq:argument_sub_optimal_2}
	\norm{\bE\bx_p^{(k)}-\bx^*}&= \| (s_{k,p,*}(\eta))\eta + \bv\|\nonumber\\
	&\ge | s_{k,p,m}(\eta)\eta + v_m|\nonumber\\
	&= |v_m||s_{k,p,m}(\eta)\eta/v_m + 1|
\end{align}
By \lemref{lemma:smooth_and_unknown_strongly_convex_polynomials}, there exists $\eta\in(L/2,L)$, such that 
\begin{align} \label{ineq:accelerated_polynomial}
	|s_{k,p,m}(\eta)\eta/v_m + 1|\ge (1-\eta/L)^{k+1}.
\end{align}
Defining $Q$ and $\bq$ accordingly, and choosing, e.g., $\bv=R\be_1$ where $R$ denotes a prescribed distance, yields 
\begin{align*}
	\norm{\bE\bx_p^{(k)}-\bx^*}&\ge  (1-\eta/L)^{k+1}.
\end{align*}
Using the fact that $f$ is $L/2$-strongly convex concludes the proof.

																											\section{Technical Lemmas}

Below, we provide 3 lemmas which are used to bound from below the quantity $|s(\eta)\eta+1|$ over different domains of $\eta$, where $s(\eta)$ is a real polynomial. For brevity, we denote the set of real polynomials of degree $k$ by $\cP_k$.

\begin{lemma}\label{lemma:smooth_and_strongly_convex_polynomials}
Let $s(\eta)\in\cP_k$, and let $0<\mu<L$. Then,
\begin{align*}
	\max_{\eta\in[\mu,L]}|s(\eta)\eta+1|\ge \circpar{\frac{\sqrt{\kappa}-1}{\sqrt{\kappa}+1}}^{k+1}
\end{align*}
where $\kappa\coloneqq L/\mu$.
\end{lemma}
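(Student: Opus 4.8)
The plan is to reduce the claimed lower bound to a classical extremal property of Chebyshev polynomials. Observe that minimizing $\max_{\eta\in[\mu,L]}|s(\eta)\eta+1|$ over $s\in\cP_k$ is equivalent to the following problem: among all polynomials $q(\eta)$ of degree at most $k+1$ satisfying the normalization $q(0)=1$, how small can $\max_{\eta\in[\mu,L]}|q(\eta)|$ be? Indeed, writing $q(\eta)=s(\eta)\eta+1$ gives a bijection between $\{s\in\cP_k\}$ and $\{q\in\cP_{k+1}: q(0)=1\}$, since a polynomial of degree $\le k+1$ with $q(0)=1$ is exactly one of the form $1+\eta\cdot s(\eta)$ with $\deg s\le k$. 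So I would first state this reformulation explicitly.

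Next I would invoke the standard fact that the polynomial of degree $n$ least deviating from zero on an interval $[a,b]$, subject to a prescribed value at a point outside the interval, is (a scaled/shifted) Chebyshev polynomial. Concretely, let $T_n$ be the degree-$n$ Chebyshev polynomial of the first kind, and define the affine map $\ell:[\mu,L]\to[-1,1]$ by $\ell(\eta)=\frac{2\eta-(\mu+L)}{L-\mu}$. Then the candidate optimal polynomial is
\begin{align*}
	q^*(\eta)=\frac{T_{k+1}(\ell(\eta))}{T_{k+1}(\ell(0))},
\end{align*}
which satisfies $q^*(0)=1$ and has $\max_{\eta\in[\mu,L]}|q^*(\eta)|=1/|T_{k+1}(\ell(0))|$, since $|T_{k+1}|\le1$ on $[-1,1]$. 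The optimality of this choice — i.e. that no admissible $q$ does better — follows from the usual equioscillation/sign-alternation argument: if some $q$ with $q(0)=1$ had strictly smaller sup-norm on $[\mu,L]$, then $q^*-q$ would have too many sign changes (it alternates sign at the $k+2$ extremal points of $T_{k+1}$ in $[\mu,L]$) while also vanishing at $0$, forcing $\deg(q^*-q)\ge k+2$, a contradiction. I expect this alternation argument to be the only real content; it is routine but must be carried out carefully.

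Finally I would evaluate the bound. We have $\ell(0)=-\frac{\mu+L}{L-\mu}$, so $|\ell(0)|=\frac{L+\mu}{L-\mu}=\frac{\kappa+1}{\kappa-1}>1$. Using the identity $T_{k+1}(x)=\frac12\big((x+\sqrt{x^2-1})^{k+1}+(x-\sqrt{x^2-1})^{k+1}\big)$ for $|x|\ge1$, one computes $x+\sqrt{x^2-1}$ at $x=\frac{\kappa+1}{\kappa-1}$ to be $\frac{\sqrt\kappa+1}{\sqrt\kappa-1}$ (a direct simplification, since $x^2-1=\frac{4\kappa}{(\kappa-1)^2}$ gives $\sqrt{x^2-1}=\frac{2\sqrt\kappa}{\kappa-1}$). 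Hence $|T_{k+1}(\ell(0))|\ge\frac12\big(\frac{\sqrt\kappa+1}{\sqrt\kappa-1}\big)^{k+1}$, and therefore every admissible $q$ satisfies $\max_{\eta\in[\mu,L]}|q(\eta)|\ge 1/|T_{k+1}(\ell(0))|$... but wait, that only gives a $2\big(\frac{\sqrt\kappa-1}{\sqrt\kappa+1}\big)^{k+1}$-type upper bound on the minimum, not the stated lower bound without the factor $2$. To get the clean lower bound $\big(\frac{\sqrt\kappa-1}{\sqrt\kappa+1}\big)^{k+1}$ I would instead use that $|T_{k+1}(\ell(0))|\le \big(\frac{\sqrt\kappa+1}{\sqrt\kappa-1}\big)^{k+1}$ (dropping the second, smaller term in the Chebyshev identity and halving — more precisely $T_{k+1}(x)\le (x+\sqrt{x^2-1})^{k+1}$ for $x\ge 1$), which yields $\min_s\max_\eta|s(\eta)\eta+1| = 1/|T_{k+1}(\ell(0))| \ge \big(\frac{\sqrt\kappa-1}{\sqrt\kappa+1}\big)^{k+1}$, as claimed. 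The main obstacle is thus not any single hard step but getting the constants and the direction of each Chebyshev estimate right so that the final inequality comes out exactly in the stated form.
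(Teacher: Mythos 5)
Your proposal is correct and follows essentially the same route as the paper: reformulate as minimizing the sup-norm over $[\mu,L]$ of degree-$(k+1)$ polynomials normalized to equal $1$ at $0$, use the shifted/scaled Chebyshev polynomial $T_{k+1}\left(\frac{2\eta-\mu-L}{L-\mu}\right)/T_{k+1}\left(\frac{L+\mu}{L-\mu}\right)$ as extremal, prove optimality via the sign-alternation argument together with the extra root at $\eta=0$, and finish by bounding $T_{k+1}\left(\frac{\kappa+1}{\kappa-1}\right)\le\left(\frac{\sqrt\kappa+1}{\sqrt\kappa-1}\right)^{k+1}$ after simplifying $x+\sqrt{x^2-1}$ at $x=\frac{\kappa+1}{\kappa-1}$. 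Your mid-proof correction about the direction of the Chebyshev estimate resolves the only potential pitfall, and the resulting bound matches the paper exactly.
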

\begin{proof}
Denote $q(\eta)\coloneqq T_{k+1}^{-1}\circpar{\frac{L+\mu}{L-\mu}}T_{k+1}\circpar{\frac{2\eta-\mu-L}{L-\mu}}$,
where $T_k(\eta)$ denotes the Chebyshev polynomial of degree $k$,
\begin{align}\label{eq:Chebyshev}
	T_k(\eta)=
	\begin{cases}
		\cos(k\arccos(\eta))& |\eta|\le1\\
		\cosh(k\operatorname{arcosh}(\eta))& \eta\ge1\\
		(-1)^n\cosh(k\operatorname{arcosh}(-\eta))& \eta\le-1.
	\end{cases}
\end{align}
It follows that $|T_{k+1}(\eta)|\le1$ for $\eta\in[-1,1]$ and $$T_{k+1}(\cos(j\pi/(k+1)))=(-1)^j,~j=0,\dots,k+1.$$ Accordingly, $|q(\eta)|\le T_{k+1}^{-1}\circpar{\frac{L+\mu}{L-\mu}},~\eta\in[\mu,L]$ and 
\begin{align*}
q\circpar{ \theta_j }=(-1)^jT_{k+1}^{-1}\circpar{\frac{L+\mu}{L-\mu}},~j=0,\dots,k+1,
\end{align*}
where
\begin{align*}
	\theta_j=\frac{\cos(j\pi/(k+1))(L-\mu) + \mu + L}{2}. 
\end{align*}
Suppose, for the sake of contradiction, that 
\begin{align*}
\max_{\eta\in[\mu,L]}|s(\eta)\eta+1|<\max_{\eta\in[\mu,L]}|q(\eta)|.
\end{align*}
Thus, for $r(\eta)=q(\eta)-(1+s(\eta)\eta))$, we have $r(\theta_j)>0$  for even $j$, and $r(\theta_j)<0$ for odd $j$. Hence, $r(\eta)$ has $k+1$ roots in $[\mu,L]$. But, since $r(0)=0$ and $\mu>0$, it follows $r(\eta)$ has at least $k+2$ roots, which contradicts the fact that the degree of $r(\eta)$ is at most $k+1$. Therefore,
\begin{align*}
\max_{\eta\in[\mu,L]}|s(\eta)\eta+1|\ge\max_{\eta\in[\mu,L]}|q(\eta)|= T_{k+1}^{-1}\circpar{\frac{\kappa+1}{\kappa-1}},
\end{align*}
where $\kappa=L/\mu$. Since $(\kappa+1)/(\kappa-1)\ge1$, we have by \eqref{eq:Chebyshev},
\begin{align*}
T_k\circpar{\frac{\kappa+1}{\kappa-1}}&=\cosh\circpar{k\operatorname{arcosh}\circpar{\frac{\kappa+1}{\kappa-1}}}\\
&=\cosh\circpar{k \ln\circpar{\frac{\kappa+1}{\kappa-1}+\sqrt{\circpar{\frac{\kappa+1}{\kappa-1}}^2-1} }}\\
&=\cosh\circpar{k \ln\circpar{\frac{\kappa+2\sqrt\kappa+1}{\kappa-1} }}\\
&=\cosh\circpar{k \ln\circpar{\frac{ \sqrt\kappa+1}{\sqrt\kappa-1} }}\\
&= \frac{1}{2}\circpar{\circpar{\frac{ \sqrt\kappa+1}{\sqrt\kappa-1} }^k+\circpar{\frac{\sqrt\kappa-1}{ \sqrt\kappa+1} }^k  }\\
&\le \circpar{\frac{ \sqrt\kappa+1}{\sqrt\kappa-1}}^k.
\end{align*}
Hence,
\begin{align*}
	\max_{\eta\in[\mu,L]}|s(\eta)\eta+1|\ge \circpar{\frac{ \sqrt\kappa-1}{\sqrt\kappa+1}}^{k+1}
\end{align*}
\end{proof}

\begin{lemma}\label{lemma:smooth_polynomials}
Let $s(\eta)\in\cP_k$, and let $0<L$. Then,
\begin{align*}
	\max_{\eta\in[0,L]}\eta|s(\eta)\eta+1|^2\ge \frac{L}{(2k+3)^2}
\end{align*}
\end{lemma}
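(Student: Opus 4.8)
The plan is to reduce the quantity $\max_{\eta \in [0,L]} \eta |s(\eta)\eta + 1|^2$ to a question about a single univariate polynomial and then exploit an extremal property of a suitably shifted and normalized Chebyshev-type polynomial. Observe that $p(\eta) := \eta(s(\eta)\eta+1)^2$ is a polynomial of degree at most $2k+3$ with the property that $p(0) = 0$ and $p'(0) = 1$ (since near $\eta = 0$ we have $s(\eta)\eta + 1 \approx 1$, so $p(\eta) \approx \eta$). So we really want to lower-bound $\max_{\eta \in [0,L]} p(\eta)$ over all polynomials $p$ of degree $\le 2k+3$ that vanish at $0$, are nonnegative on $[0,L]$ (they are a perfect square times $\eta$, hence $\ge 0$ there), and have derivative $1$ at the origin. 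Equivalently, writing $p(\eta) = \eta \cdot g(\eta)$ with $g(\eta) = (s(\eta)\eta+1)^2$ of degree $\le 2k+2$, $g(0) = 1$, we want to bound $\max_{\eta\in[0,L]}\eta\, g(\eta)$ from below.

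First I would argue by contradiction: suppose $\max_{\eta \in [0,L]} \eta|s(\eta)\eta+1|^2 < \frac{L}{(2k+3)^2}$. The natural comparison object is the polynomial built from a Chebyshev polynomial reflected about the endpoint $0$. Concretely, I would consider $U(\eta)$ defined so that $\eta\, U(\eta)^2$ resembles the shifted Chebyshev polynomial $T_{2k+3}$ restricted to $[0,L]$ after the substitution that maps $[0,L]$ to $[-1,1]$; the relevant normalization is to rescale so that the comparison polynomial $r(\eta)$ has $r(\eta) \le \eta|s(\eta)\eta+1|^2$ fails to be ruled out unless $r$ has too many sign changes. More directly, one can use the known equioscillation characterization: among all polynomials $q$ of degree $\le n$ with $q(0)=0$, $q'(0)=1$ that stay in $[-M,M]$ on $[0,L]$, the extremal one is a rescaled Chebyshev polynomial, and this forces $M \ge c\, L/n^2$ for the appropriate constant; here $n = 2k+3$. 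I would spell this out by letting $r(\eta)$ be the degree-$(2k+3)$ polynomial $\tfrac{L}{(2k+3)^2}\cdot \frac{1}{2}\bigl(1 - T_{2k+3}(1 - 2\eta/L)\bigr)$ or a close variant, check $r(0)=0$, $r'(0) = $ the right value, and that $r$ equioscillates between $0$ and $\frac{L}{(2k+3)^2}$ at roughly $2k+3$ points in $[0,L]$; then $r(\eta) - \eta(s(\eta)\eta+1)^2$ would have too many roots (counting the forced double root structure and the root at $\eta=0$) relative to its degree, a contradiction.

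The cleanest route, mirroring the proof of \lemref{lemma:smooth_and_strongly_convex_polynomials}, is to take square roots: assume for contradiction the max is strictly less than $\frac{L}{(2k+3)^2}$, so $|\sqrt{\eta}\,(s(\eta)\eta+1)| < \frac{\sqrt L}{2k+3}$ on $[0,L]$. Now $\sqrt\eta\,(s(\eta)\eta+1)$ is not a polynomial, but $\sqrt\eta\,(s(\eta)\eta+1)$ as a function of $t = \sqrt\eta$ is an odd polynomial in $t$ of degree $2k+3$ on $[0,\sqrt L]$, namely $t(s(t^2)t^2 + 1)$. Extending it oddly to $[-\sqrt L, \sqrt L]$, I get an odd polynomial $P(t)$ of degree $2k+3$ with $P(t) = t + O(t^3)$ near $0$, i.e. $P(0) = 0$, $P'(0) = 1$, and $|P(t)| < \frac{\sqrt L}{2k+3}$ on $[-\sqrt L, \sqrt L]$. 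Comparing with the Chebyshev polynomial $\frac{\sqrt L}{2k+3}\, T_{2k+3}(t/\sqrt L)$, which is odd, equioscillates $\pm\frac{\sqrt L}{2k+3}$ at $2k+4$ points in $[-\sqrt L,\sqrt L]$, and has derivative at $0$ equal to $\frac{\sqrt L}{2k+3}\cdot \frac{(2k+3)}{\sqrt L}\cdot (\text{leading Chebyshev coefficient term}) = (2k+3)^2/\text{(something)}$ — one checks $T_{2k+3}'(0)$ and sees the derivative of the comparison polynomial at $0$ is $\ge 1$ — the difference $\frac{\sqrt L}{2k+3}T_{2k+3}(t/\sqrt L) - P(t)$ changes sign at the $2k+4$ equioscillation nodes and has a root at $0$ as well, giving at least $2k+4$ roots for a polynomial of degree $\le 2k+3$, forcing it to vanish identically; but then $P'(0) = \frac{d}{dt}\big|_0 \frac{\sqrt L}{2k+3}T_{2k+3}(t/\sqrt L) \ne 1$ generically, contradiction — and a short explicit computation using $|T_n'(0)|$ pins down the constant, yielding exactly $\frac{L}{(2k+3)^2}$.

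The main obstacle I anticipate is getting the constant exactly right rather than up to a factor: one must carefully track $T_{2k+3}'(0)$ and the normalization so that the bound comes out as $\frac{L}{(2k+3)^2}$ and not, say, $\frac{L}{4(2k+3)^2}$, and handle the parity bookkeeping when extending oddly and when counting roots (making sure the root at $t=0$ is genuinely extra and not already among the equioscillation nodes, and that the equioscillation nodes of $T_{2k+3}$ are symmetric so the odd-extension argument is consistent). A secondary subtlety is that the equioscillation of the comparison polynomial is between $+M$ and $-M$ whereas $\eta|s(\eta)\eta+1|^2$ is nonnegative; the square-root trick is precisely what reconciles this, converting the one-sided nonnegative problem on $[0,L]$ into a symmetric two-sided problem on $[-\sqrt L,\sqrt L]$ where Chebyshev extremality applies directly. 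Once the substitution $t=\sqrt\eta$ is in place, the remainder is the same contradiction-via-root-counting argument already used for \lemref{lemma:smooth_and_strongly_convex_polynomials}.
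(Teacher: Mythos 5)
Your overall strategy matches the paper's: pass to $t=\sqrt{\eta}$, compare the odd polynomial $t\,(s(t^2)t^2+1)$ against a rescaled odd Chebyshev polynomial $T_{2k+3}$, and force a contradiction by counting roots of the difference. However, there is a genuine gap in the root-counting step, and a computational error feeding into it. First, the difference $D(t)=\frac{\sqrt L}{2k+3}T_{2k+3}(t/\sqrt L)-P(t)$ does not change sign \emph{at} the $2k+4$ equioscillation nodes; it alternates in sign \emph{at} those nodes, hence changes sign \emph{between} consecutive nodes, giving $2k+3$ sign-change roots, not $2k+4$. Second, because $D$ is odd, $t=0$ is automatically one of those $2k+3$ roots (it lies in the middle interval, and oddness forces $D$ to vanish there and change sign), so it is not an ``extra'' root. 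With only $2k+3$ roots for a polynomial of degree $\le 2k+3$, you have no contradiction yet. Third, the derivative you invoke is wrong: $T_n'(0)=\pm n$ for odd $n$ (so the derivative of $\frac{\sqrt L}{2k+3}T_{2k+3}(t/\sqrt L)$ at $0$ is exactly $\pm1$, not ``$\ge 1$''); you may be thinking of $T_n'(1)=n^2$, which is the relevant quantity for your unpursued endpoint-shifted variant $\frac{1}{2}(1-T_{2k+3}(1-2\eta/L))$.

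The fix is precisely the derivative constraint used with the correct sign normalization: choose the sign $\pm T_{2k+3}$ so that the comparison polynomial also has derivative $+1$ at $0$, matching $P'(0)=1$. Then $D(0)=D'(0)=0$, and since $D$ is odd this forces $D(t)=t^3E_1(t^2)$, i.e.\ a root of multiplicity $\ge 3$ at $0$; combined with the $2k+2$ other sign-change roots (at $\pm t_i$, $t_i>0$), $D$ has $\ge 2k+5$ roots with multiplicity, exceeding its degree, so $D\equiv 0$. The contradiction is then that $P$ \emph{equals} the scaled Chebyshev polynomial and therefore attains $\frac{\sqrt L}{2k+3}$ at the endpoint, contradicting your assumed strict inequality --- not a derivative mismatch, since after normalization the derivatives agree. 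The paper sidesteps all this parity bookkeeping by pulling the $\sqrt{\eta}$ out from the start: it defines $q(\eta)=\pm(2k+3)^{-1}\sqrt{L/\eta}\,T_{2k+3}(\sqrt{\eta/L})$, a genuine polynomial in $\eta$ of degree $k+1$ with $q(0)=1$, and works with $r(\eta)=q(\eta)-(1+s(\eta)\eta)$ of degree $\le k+1$ on $[0,L]$; then the $k+2$ squared nodes $\theta_j\in(0,L]$ give $k+1$ sign-change roots, $r(0)=0$ supplies a genuinely distinct $(k+2)$-nd root, and the contradiction is immediate without any multiplicity argument.
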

\begin{proof}
First, we define 
\begin{align*}
	q(\eta)&= 
	\begin{cases}
			(-1)^k(2k+3)^{-1}\sqrt{L/\eta} T_{2k+3}(\sqrt{\eta/L}) &\eta\neq0\\0&\eta=0
	\end{cases}
\end{align*}
where $T_k(\eta)$ is the $k$'th Chebyshev polynomial (see (\ref{eq:Chebyshev})). Let us show that $q(\eta)$ is a polynomial of degree $k+1$ and that $q(0)=1$. The following trigonometric identity
\begin{align*}
	\cos\alpha  +\cos\beta=2\cos\circpar{\frac{\alpha-\beta}{2}}\cos\circpar{\frac{\alpha+\beta}{2}},
\end{align*}
together with (\ref{eq:Chebyshev}), yields the following recurrence formula
\begin{align*}
	T_k(\eta)= 2\eta T_{k-1}(\eta) - T_{k-2}(\eta).
\end{align*}
Noticing that $T_0(\eta)=1$ and $T_1(\eta)=x$ (also by (\ref{eq:Chebyshev})), we can use mathematical induction to prove that Chebyshev polynomials of odd degree have only odd powers and that the corresponding coefficient for the first power $\eta$ in $T_{2k+3}(\eta)$ is indeed $(-1)^k(2k+3)$. Equivalently, we get that $q(\eta)$ is a polynomial of degree $k+1$ and that $q(0)=1$. Next, note that for 
\begin{align*}
	\theta_j&= L\cos\circpar{\frac{j\pi}{2k+3}}^2\in[0,L],\quad j=0,\dots,k+1
\end{align*}
we have
\begin{align*}
	\max_{\eta\in[0,L]}\eta^{1/2}|q(\eta)|=(-1)^j\theta_j^{1/2} q(\theta_j) = \frac{\sqrt{L}}{2k+3}.
\end{align*}
Now, suppose, for the sake of contradiction, that
\begin{align*}
	 \max_{\eta\in[0,L]}\eta|s(\eta)\eta+1|^2< \max_{\eta\in[0,L]}\eta|q(\eta)|^2.
\end{align*}
In particular,
\begin{align*}
	 \theta_j^{1/2}|s(\theta_j)\theta_j^{1/2}+1|< \theta_j^{1/2}|q(\theta_j)|.
\end{align*}
Since $\theta_j>0$, we have
\begin{align*}
	 |s(\theta_j)\theta_j^{1/2}+1|< |q(\theta_j)|.
\end{align*}
We proceed in a similar way to the proof of \lemref{lemma:smooth_and_strongly_convex_polynomials}. For $r(\eta)=q(\eta)-(1+s(\eta)\eta))$, we have $r(\theta_j)>0$  for even $j$, and $r(\theta_j)<0$ for odd $j$. Hence, $r(\eta)$ has $k+1$ roots in $[\theta_{k+1},L]$. But, since $r(0)=0$ and $\theta_{k+1}>0$, it follows $r(\eta)$ has at least $k+2$ roots, which contradicts the fact that degree of $r(\eta)$ is a at most $k+1$. Therefore,
\begin{align*}
	 \max_{\eta\in[0,L]}\eta|s(\eta)\eta+1|^2\ge \max_{\eta\in[0,L]}\eta|q(\eta)|^2\ge\frac{L}{(2k+3)^2}
\end{align*}
concluding the proof.

\end{proof}

\begin{lemma}\label{lemma:smooth_and_unknown_strongly_convex_polynomials}
Let $s(\eta)\in\cP_k$, and let $0<L$. Then exactly one of the two following holds:
\begin{enumerate}
	\item For any $\epsilon>0$, there exists $\eta\in(L-\epsilon,L)$ such that 
\begin{align*}
	|s(\eta)\eta+1|>(1-\eta/L)^{k+1}.
\end{align*}
	\item $s(\eta)\eta+1=(1-\eta/L)^{k+1}$.
\end{enumerate}	
\end{lemma}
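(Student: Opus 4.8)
The plan is to reduce everything to the single polynomial $P(\eta)\coloneqq s(\eta)\eta+1$, which has degree at most $k+1$ and satisfies $P(0)=1$, and to compare it against the reference polynomial $R(\eta)\coloneqq(1-\eta/L)^{k+1}$, which also has degree $k+1$ and satisfies $R(0)=1$. The two alternatives are clearly mutually exclusive: if $P\equiv R$ (alternative 2), then for every $\eta<L$ we have $|P(\eta)|=(1-\eta/L)^{k+1}$, so the strict inequality demanded by alternative 1 can never occur. It therefore suffices to prove that if $P\not\equiv R$ then alternative 1 holds.

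The main step is a structural observation: if $P\not\equiv R$, then $P$ cannot have a zero of multiplicity $\ge k+1$ at $\eta=L$. Indeed, if it did, then since $\deg P\le k+1$ we would have $P(\eta)=c(\eta-L)^{k+1}$ for some constant $c$, and evaluating at $\eta=0$ together with $P(0)=1$ forces $c=(-L)^{-(k+1)}$, i.e.\ $P\equiv R$, a contradiction. Hence we may let $j^{\ast}\in\{0,1,\dots,k\}$ be the exact multiplicity of $\eta=L$ as a root of $P$ (interpreting $j^{\ast}=0$ as $P(L)\neq0$) and factor $P(\eta)=(\eta-L)^{j^{\ast}}w(\eta)$ with $w$ a polynomial and $w(L)\neq0$.

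Now for $\eta<L$ we have $|\eta-L|=L(1-\eta/L)$, so
\begin{align*}
	|P(\eta)|=L^{j^{\ast}}(1-\eta/L)^{j^{\ast}}\,|w(\eta)|,
\end{align*}
and the desired inequality $|P(\eta)|>(1-\eta/L)^{k+1}$ is equivalent to $L^{j^{\ast}}|w(\eta)|>(1-\eta/L)^{\,k+1-j^{\ast}}$. As $\eta\to L^{-}$ the left side converges to $L^{j^{\ast}}|w(L)|>0$, while the right side converges to $0$ since the exponent $k+1-j^{\ast}\ge1$; by continuity of $w$ there is some $\delta>0$ for which this inequality holds on all of $(L-\delta,L)$. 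Given an arbitrary $\epsilon>0$, any point of the nonempty interval $(L-\min\{\epsilon,\delta\},L)$ then verifies alternative 1, which completes the argument.

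I do not anticipate a real obstacle: the only slightly delicate point is recognizing that a degree-$(k+1)$ polynomial with a root of full multiplicity $k+1$ at $L$ and value $1$ at the origin must coincide with $R$; the remainder is an elementary asymptotic comparison near $\eta=L$.
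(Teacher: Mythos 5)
Your proof is correct. It is built on the same underlying observation as the paper's proof---that if alternative~1 fails, then $P(\eta)\coloneqq s(\eta)\eta+1$ must vanish at $\eta=L$ to order at least $k+1$, which together with $\deg P\le k+1$ and $P(0)=1$ pins $P$ down to $(1-\eta/L)^{k+1}$---but the packaging is genuinely different. The paper substitutes $\eta\mapsto L(1-\eta)$ to move the point $L$ to the origin, then runs an induction on the coefficients of the transformed polynomial $q$, using $|q(\eta)|\le\eta^{k+1}$ and one-sided limits to show $q_0=\dots=q_k=0$. You instead take the contrapositive: assuming $P\not\equiv R$, you note the multiplicity $j^{*}$ of $L$ as a root of $P$ is at most $k$, factor $P(\eta)=(\eta-L)^{j^{*}}w(\eta)$ with $w(L)\ne 0$, and compare growth orders as $\eta\to L^-$, where the left side of the target inequality decays like $(1-\eta/L)^{j^{*}}$ and the right side like $(1-\eta/L)^{k+1}$. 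Your version avoids both the change of variables and the coefficient induction, replacing them by a single factorization and an elementary asymptotic comparison; the paper's version is more mechanical but produces $P\equiv R$ directly rather than by contraposition. Either argument is rigorous; yours is arguably the more transparent, since it makes the role of the multiplicity at $L$ explicit.
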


\begin{proof}
It suffices to show that if (1) does not hold then $s(\eta)\eta+1=(1-\eta/L)^{k+1}$. Suppose that there exists $\epsilon>0$ such that for all $\eta\in(L-\epsilon,L)$ it holds that 
\begin{align*}
	|s(\eta)\eta+1| \le  \left(1-\frac{\eta}{L} \right)^{k+1}.
\end{align*}
Define 
\begin{align} \label{def:q_polynomial}
q(\eta)\coloneqq s\left(L(1-\eta)\right)L(1-\eta) + 1
\end{align}
and denote the corresponding coefficients by $q(\eta)=\sum_{j=0}^{k+1} q_i \eta^j$.
We show by induction that $q_j=0$ for all $j=0,\dots,k$. \\ For $j=0$ we have that since for any $\eta\in(0, 1- (L-\epsilon)/L  )$
\begin{align*}
	|q(\eta)|\le \left(1-\frac{L(1-\eta)}{L} \right)^{k+1} = \eta^{k+1},
\end{align*}
it holds that 
\begin{align*}
	|q_0|=|q(0)|=\left|\lim_{\eta\to 0^+} q(\eta)\right|&\le \lim_{\eta\to 0^+} \eta^{k+1}=0.
\end{align*}
Now, if $q_0=\dots=q_{m-1}=0$ for $m<k+1$ then 
\begin{align*}
	|q_{m}|=\left|\frac{q(0)}{\eta^{m}}\right|=\left|\lim_{\eta\to 0^+} \frac{q(\eta)}{\eta^{m}}\right|&\le \lim_{t\to 0^+} \eta^{k+1-m}=0.
\end{align*}
Thus, proving the induction claim. This, in turns, implies that $q(\eta)=q_{k+1}\eta^{k+1}$. Now, by \eqref{def:q_polynomial}, it follows that $q_{k+1}=q(1)=1$. Hence, $q(\eta)=\eta^{k+1}$. Lastly, using \eqref{def:q_polynomial} again yields
\begin{align*}
	s(\eta)\eta+1 = q\left(1-\frac{\eta}{L}\right) =\left(1-\frac{\eta}{L}\right)^{k+1},
\end{align*}
concluding the proof.
\end{proof}

\end{document}